\newcommand{\HOX}[1]{\marginpar{\scriptsize #1}}
\newcommand{\ind}[1]{\mathbf{1}{\{ #1 \}}}
\newcommand{\1}{\mathbf{1}}
\newcommand{\icx}{\preceq_{icx}}
\newcommand{\pmicx}{\preceq_{icx}}
\newcommand{\0}{x}
\renewcommand{\rho}{x}
\renewcommand{\phi}{\varphi}
\newcommand{\E}{\mathbf E}
\renewcommand{\P}{\mathbf P}
\newcommand{\eqd}{\overset{d}=}
\DeclareMathOperator{\var}{var}
\begin{document}

\section{Introduction}

We study a class of diffusion-limited annihilating systems (DLAS) in which $A$-particles diffuse across a graph interspersed with stationary $B$-particles. Mutual annihilation $A+B \to \varnothing$ occurs whenever opposite particle types meet. 
 DLAS were introduced by physicists as toy models exhibiting anomalous kinetic behavior observed in more complicated reactions. The overarching finding was that spatial concentration fluctuations of reactants play a significant role in solute decay \cite{gopich1996kinetics, tartakovsky2012effect}. Such fluctuations arise naturally in physical systems with thermal fluctuations \cite{ovchinnikov1978role, toussaint1983particle}, turbulent
flows \cite{hill1976homogeneous}, and porous media \cite{raje2000experimental}.
 
For two-type systems, Bramson and Lebowitz gave an extensive rigorous analysis of the setting in which both particle types are mobile and diffuse at the same rate \cite{BL2, BL3, BL4, BL5}. Cabezas, Rolla, and Sidoravicius made progress on two-type systems with asymmetric diffusion rates \cite{CRS}.
More results concerning the limiting density of particles were obtained in \cite{johnson2020particle, dlas_star, parking_on_integers}. 

The extreme case, in which $B$-particles are stationary, has also been studied by combinatorialists and probabilists under the name \emph{parking} \cite{KW}. 
This comes from viewing $A$-particles as cars in search of $B$-particle spots. 
The impetus of our present work comes from results for parking on Galton-Watson trees \cite{tree, curien2019phase, contat2020sharpness, bahl2019parking, chen2021}. 
These articles considered the setting with one $B$-particle per site along with an independent and identically distributed number of $A$-particles (any initially overlapping $A$- and $B$-particles are cancelled out). 
The graph is directed so that all $A$-particles move towards the root in discrete time.
The total number of $A$-particles to visit the root has two phases: \emph{transience} (finitely many visits almost surely) and \emph{recurrence} (infinitely many visits with positive probability). Another version of a DLAS with stationary $B$-particles was introduced in \cite{rivera2019dispersion}, which studied simultaneous internal  diffusion-limited aggregation on finite graphs. This model corresponds to a DLAS with a finite number of $A$-particles at the root of a connected graph and one $B$-particle at each nonroot vertex.

An interesting feature of parking on directed Galton-Watson trees is that  the phase behavior depends on more than the average initial particle density.
Curien and H\'enard gave a precise characterization of the transience/recurrence phase behavior on critical Galton-Watson trees that involves the mean and variance of the number of $A$-particles as well as the variance of the offspring distribution for the tree \cite{curien2019phase}. The phase transition was later proven to be sharp by Contat \cite{contat2020sharpness}.   

There are no known explicit criteria for recurrence and transience for parking on supercritical Galton-Watson trees. However, Collett, Eckmann, Glaser, and Martin gave a precise value of the phase transition in terms of exponential moments for the process on binary trees with $A$-particles started exclusively from the leaves \cite{collet1983study}. They were interested in these dynamics because of a connection to a spin glass model. 
For the usual parking process Bahl, Barnet, and Junge demonstrated through an example on $d$-ary trees that the phase state depends on more than just the mean initial density of particles \cite[Proposition 7]{bahl2019parking}. 
In an attempt to qualitatively describe this phenomenon \cite[Theorem 8]{bahl2019parking} further showed that the total number of visits to the root increases when the underlying placement of $A$-particles is made more volatile. An additional definition is needed to describe the result.

The \emph{increasing convex order} (icx order) is a less commonly used stochastic ordering that rewards
random variables for being larger or more volatile.
We say that $X \icx Y$ for two random variables $X$ and $Y$ supported on the real numbers $\mathbb R$, if for all increasing convex functions $\phi\colon \mathbb R \to \mathbb R$ it holds that $\E \phi(X) \leq \E \phi(Y)$ provided the expectations exist.  
See \cite{SS} for a thorough discussion of the icx and related stochastic orders. 

Note that the icx order is weaker than the \emph{standard order} $X \preceq_{sd} Y$, which is defined
as $\P(X \geq a) \leq \P(Y \geq a)$ for all $a$. An equivalent definition that more closely resembles the definition of the icx order is that $X \preceq_{sd} Y$ if and only if $\E \phi(X) \leq \E \phi(Y)$ for all increasing functions $\phi$ for which the expecations exist \cite[(1.A.7)]{SS}. Another familiar definition of the standard order \cite[Theorem 1.A.1]{SS} is that there exist two random variables $\hat X, \hat Y$ defined on the same probability space with $\hat X \overset{d} = X$ and $\hat Y \overset{d} = Y$ and $\P(X\leq Y) =1$. (Here $\overset d =$ denotes distributional equality.) The icx order has an analogous formulation \cite[Theorem 4.A.5]{SS} with the modification that $\{\hat X, \hat Y\}$ is a submartingale i.e., $E[\hat Y \mid \hat X] \geq \hat X$.

As an example, suppose that $X=c$ with probability one for some constant $c \in \mathbb R$, while $Y$ is any other random variable with mean $c$.
Then $X \icx Y$ by Jensen's inequality. However, $X$ and $Y$ are not comparable in the standard order. Here are a few more examples to keep in mind.
\begin{example} \thlabel{ex:1}
Let $X$ and $Y$ be real-valued random variables with $X \icx Y$. 
\begin{enumerate}[label = (\roman*)]
    \item Since $x \mapsto x$ is convex and increasing, we have $\E X \leq \E Y$.
    \item If $X$ and $Y$ are nonnegative and $\E X = \E Y$, then the fact that $x \mapsto x^2$ is convex and increasing implies that $\var(X)= \E X^2 - (\E X)^2 \leq \E Y^2 -( \E Y)^2 =  \var(Y)$.
    \item For $t \in [0,1]$ the function $x \mapsto t^x$ is convex and decreasing. If $X$ and $Y$ are nonnegative, then $\E t^X \geq \E t^Y$. Taking the limit as $t \downarrow 0$ gives $\P(X=0) \geq \P(Y =0)$.
    \item  Let $(X_T)_{T\geq 0}$ and $(Y_T)_{T \geq 0}$ be collections of nonnegative random variables supported on $[0,\infty)$ with pointwise limits $X = \lim_{T \to \infty} X_T$ and $Y = \lim_{T \to \infty} Y_T$ that exist almost surely. Suppose further that $X_T \icx Y_T$ for all $T \geq 0$  As in (iii), we have $\E t^{X_T} \geq \E t^{Y_T}$ for all $t \in [0,1]$. It follows that
    $$\P(X < \infty) = \lim_{t \uparrow 1} \lim_{T \to \infty}  \E t^{X_T} \geq \lim_{t \uparrow 1} \lim_{T \to \infty}  \E t^{Y_T} = \P(Y < \infty).$$ 
    Equivalently, $\P(X = \infty) \leq \P(Y = \infty)$. 
\end{enumerate}
\end{example}

Let $V$ (and $V'$) be the number of $A$-particles that arrive to the root for parking on a directed Galton-Watson tree with an i.i.d.\ $\eta$-distributed (i.i.d.\ $\eta'$-distributed)  $A$-particles initially at each site, one of which immediately cancels with the one $B$-particle also initially at the site. 
\cite[Theorem 8]{bahl2019parking} proved that if $\eta \icx \eta'$, then $V \icx V'$. 
Combining this with \thref{ex:1} (iv) implies that recurrence is preserved after increasing $\eta$ in the icx order. 

There are a few other results concerning particle systems and nonstandard stochastic orders. 
Johnson and Junge proved that the frog model, with $A+B \to 2A$ reactions so that mobile particles activate stationary particles, exhibits the opposite phenomena of parking: more volatility reduces root visits \cite{JJ}. Johnson and Rolla later proved via an example on regular trees that transience and recurrence of that model depends on more than the average density of the initial configuration \cite{johnson2019}. 
A similar relationship between concentration and expansion occurs with the limiting size of the ball in first passage percolation \cite{BK, Marchand}. 
Recently, Hutchcroft studied the effect of a weaker stochastic order called the germ order on transience and recurrence sets for branching random walk \cite{hutchcroft2020transience}.

\subsection{Statement of results}

 We extend \cite[Theorem 8]{bahl2019parking} to general graphs, initial particle configurations, and paths. 
 We begin with a more formal construction that follows the notation from \cite{CRS}.
 Fix a locally finite graph $G$. We write $x \in G$ and $H \subseteq G$ to denote that $x$ is an element and $H$ is a subset of the vertex set of $G$. The systems we consider here are described by time-indexed counts at each vertex $\xi = (\xi_t(x))_{t \geq 0, x \in G}.$ When $\xi_t(x)>0,$ it denotes the number of $A$-particles at $x$ at time $t$. If $\xi_t(x) <0$, then there are $|\xi_x(t)|$ many $B$-particles at $x$ at time $t$.  
 
 We call $\xi_0 = (\xi_0(x))_{x \in G}$ the \emph{initial conditions}. 
For each $x \in G$ and $j \in \mathbb Z^+$ let $S^{x,j} = (S_t^{x,j})_{t \geq 0} \colon [0,\infty) \to G$ be a right-continuous path with left limits started at $x$. 
We assign to the $j$th particle counted by $\xi_0(x)$ the path $S^{x,j}$.
We further assign to the $j$th $A$-particle counted by $\xi_0(x)$ a \emph{braveness} $h^{x,j} \in [0,1]$
and to the $j$th $B$-particle braveness $h^{x,-j}\in[0,1]$, to be used to break ties
in deciding which particles to annihilate when multiple $A$- and $B$-particles end up
at the same site. We assume that the bravenesses $(h^{x,j})_{x \in G, j \in \mathbb Z \setminus \{0\}}$ are distinct.
Since $B$-particles do not have paths or anything else to distinguish them, there is no real need to
determine which $B$-particle on a site is annihilated, but it will be convenient to do so
when we consider variant processes. 

We call $(S,h) = (S_t^{x,j}, h^{x,j})_{x \in G, j \in \mathbb Z \setminus \{0\}, t \geq 0}$ the \emph{instructions}.
Particles follow their assigned trajectories and when one or more $A$-particles arrive to a site containing $B$-particles, the bravest of the $A$-particles mutually annihilate with $B$-particles until there are no remaining pairs of opposite type particles at the site. 
We say that $(\xi_0,S,h)$ is \emph{regular} if all of the following hold:
\begin{itemize}
    \item The initial conditions $(\xi_0(x))$ are independent over $x\in G$; the instructions
      $(S^{x,j})$ and $(h^{x,j})$ are both independent over $x\in G$ and $j\in\mathbb{Z}\setminus \{0\}$; 
      and the three collections of random variables are independent of each other as well.
    \item For $j >0$, the $S^{x,j}$ are random walk paths with the same transition kernel, either
      entirely in continuous time (i.e., jumping at times given by a unit intensity Poisson process) or entirely in 
      discrete time (i.e., jumping at positive integer times).
    \item For $j<0$, the paths $S_t^{x,j} \equiv x$ so that $B$-particles are stationary.
    \item Each $h^{x,j}$ is a uniform random variable on $[0,1]$.
    \item The instructions and initial conditions are such that the system
      is well defined and can be locally approximated by a system with finitely many particles,
      in the sense of \thref{rmk:finite.approximation}.

\end{itemize}

\begin{remark}\thlabel{rmk:finite.approximation}
   We say that $(\xi_0,S,h)$ can be locally approximated by a system with finitely many particles
   if the following holds:
   Let $G_1\subseteq G_2\subseteq \cdots\subseteq G$ be any sequence of finite
   subgraphs whose union is $G$. Let $H \subseteq G$ be a finite collection of vertices and let $\bigl(\xi_t^{(n)}(x)\bigr)_{t\geq 0,x\in G}$ be DLAS defined
   by $(S,h)$ with initial conditions $\xi_0(x)\ind{x\in G_n}$.
   Then the sequence $(\xi_t^{(n)}(x))_{0\leq t\leq T,x\in H}$ indexed by $n$ is almost surely
   eventually constant, and its limit does does not depend on the choice of
   $G_1,G_2,\ldots$.

   The point of this rather technical condition is simply to avoid pathological examples.
   A system defined by $(\xi_0,S,h)$ may not yield a well-defined DLAS $(\xi_t(x))_{t\geq 0,x\in G}$ if,
   for example, blow-up occurs and infinitely many particles move to a single vertex in a finite amount
   of time. It is proven in \cite[Appendix~A]{CRS} that local approximation holds when 
   the initial conditions satisfy $\sup_{x\in G}\E\xi_0(x)<\infty$ and the paths are
   random walks on any graph with a transitive unimodular group of automorphisms that
   commutes with the probability kernel of the random walk. It is not hard to show that
   it holds for a DLAS such that a system of noninteracting random walks with the same initial 
   conditions has finitely many visits in finite time to any site.
\end{remark}

Let $H \subseteq G$. Define the \emph{occupation time of $H$ by $A$-particles up to time $T\geq 0$} as
\begin{align}V_T(H,\xi_0, S,h) = V_T = \sum_{x \in H} \int_0^T\ind{\xi_t(x)>0} \xi_t(x)dt
\label{eq:V}.\end{align}
We set $V_T = \infty$ whenever $\sum_{x \in H} \xi_0(x) \ind{\xi_0(x) >0} = \infty$. To avoid this pathology we assume that either $H$ or the number of $A$-particles in the system is finite:
\begin{align}
    \min \left\{|H|, \textstyle \sum_{x \in G} \xi_0^+(x)\right \} < \infty \text{ almost surely}\label{eq:fin}.
\end{align}
The prototypical choice is $H = \{x\}$, so that $V_T$ measures the occupancy time of a single site. Another interesting choice is $H=G$ so that $V_T$ equals the aggregate time all of the $A$-particles are in motion. This case is only meaningful when \eqref{eq:fin} holds. We further remark that \eqref{eq:fin} along with the requirement $\sup_{x \in G, t \geq 0} \E |\xi_t(x)|< \infty$ ensure that $V_T$ is almost surely finite for all $T>0$. 

Given another DLAS $\xi'$ define $V'_T$ to be the analogue of \eqref{eq:V} for $\xi'$. 
A useful condition for comparing two systems $(\xi_0, S,h)$ and $(\xi_0', S',h')$ is
\begin{align}
    \text{$(\xi_0, S,h)$ and $(\xi_0',S',h')$ are regular with $S\overset d = S'$ and $h \overset d = h'$} \label{eq:comparable}.
\end{align}
We write $\xi_0 \pmicx \xi_0'$ if $\xi_0(x) \icx \xi_0'(x)$ for all $x \in G$. 

\begin{theorem}\thlabel{thm:main}
Fix $H \subseteq G$. Suppose that \eqref{eq:fin} and \eqref{eq:comparable} hold. If  $\xi_0 \pmicx \xi_0'$, then $V_T \icx V_T'$ for all $T \geq 0$.
\end{theorem}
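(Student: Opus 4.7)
The plan is to reduce the theorem to the case where $\xi_0$ and $\xi_0'$ differ at a single vertex $x^*$, and then exploit the independence and exchangeability of the iid sequence of candidate particles at $x^*$. First I would change the initial counts one vertex at a time, using that each of $\xi_0$ and $\xi_0'$ has independent vertex marginals; for infinite $G$ the interpolation is handled by truncation via \thref{rmk:finite.approximation} and a limit. This reduces matters to the following claim: if $\xi_0(x^*)\icx\xi_0'(x^*)$ while $\xi_0(y)=\xi_0'(y)$ for all $y\ne x^*$, then $V_T\icx V_T'$.

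For this single-vertex step I would condition on the $\sigma$-algebra $\mathcal{F}'$ generated by everything in $(\xi_0,\xi_0',S,h)$ except $\xi_0(x^*)$, $\xi_0'(x^*)$, and the iid sequence $((S^{x^*,j},h^{x^*,j}))_{j\ge 1}$ of candidate particles at $x^*$; given $\mathcal{F}'$ this sequence remains iid and $(\xi_0(x^*),\xi_0'(x^*))$ retains its coupled marginal law. Let $W(S)$ denote $V_T$ evaluated when the particles at $x^*$ are those indexed by $S\subseteq\mathbb{Z}^+$, with all other inputs fixed at their values on the conditioning event, so that on $\{\xi_0(x^*)=n\}$ we have $V_T=W(\{1,\dots,n\})$. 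For any increasing convex $\phi$ set
\[
\hat\Psi(n) := \E\bigl[(\phi\circ W)(\{1,\dots,n\}) \bigm| \mathcal{F}'\bigr] = \E\bigl[\phi(V_T) \bigm| \mathcal{F}',\ \xi_0(x^*)=n\bigr].
\]
The goal becomes to show $\hat\Psi$ is almost surely increasing and convex on $\mathbb{Z}_{\ge 0}$. Once established, since $\xi_0(x^*)\icx\xi_0'(x^*)$ is independent of $\mathcal{F}'$ by regularity, one obtains $\E\hat\Psi(\xi_0(x^*))\le\E\hat\Psi(\xi_0'(x^*))$; integrating out $\mathcal{F}'$ gives $\E\phi(V_T)\le\E\phi(V_T')$, and since $\phi$ was arbitrary this is $V_T\icx V_T'$.

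The main obstacle is to prove that $W$ is almost surely both \emph{monotone} (that is, $W(S)\le W(S\cup\{i\})$ for $i\notin S$) and \emph{supermodular} ($W(A\cup\{i\})-W(A)\le W(B\cup\{i\})-W(B)$ whenever $A\subseteq B$ and $i\notin B$) as a set function on $2^{\mathbb{Z}^+}$. Monotonicity captures the intuitive fact that adding an $A$-particle can only increase the $A$-occupation time---the new particle contributes during its own lifespan, and any $B$-particle it annihilates becomes unavailable to cut short some other $A$-particle's life---and I expect it to follow from the pointwise comparison $\xi_t\le\xi_t'$ between two DLAS driven by common $(S,h)$ whenever $\xi_0\le\xi_0'$, established by tracking jumps and annihilations in time order with the bravenesses serving as tiebreakers. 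Supermodularity formalizes a ``diminishing $B$-resource'' principle: when more $A$-particles are present, more $B$-particles have been consumed, so an additional $A$-particle faces less competition and contributes more to $V_T$. I would prove it by a simultaneous coupling of the four DLAS with initial conditions at $x^*$ corresponding to $A$, $A\cup\{i\}$, $B$, $B\cup\{i\}$ driven by common $(S,h)$, together with a careful, braveness-mediated, accounting of which $A$-particle annihilates which $B$-particle in each system; this bookkeeping is the delicate part of the argument.

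Granted monotonicity and supermodularity of $W$, the conclusion is routine. Composition with $\phi$ preserves both: writing $a=W(A)$, $b=W(B)$, $c=W(A\cup\{i\})$, $d=W(B\cup\{i\})$, monotonicity yields $a\le b\le d$ and $a\le c\le d$ and supermodularity yields $d-b\ge c-a$, so the non-decreasing increments of a convex increasing $\phi$ give $\phi(d)-\phi(b)\ge\phi(c)-\phi(a)$. Monotonicity of $\hat\Psi$ in $n$ is then immediate. For convexity, applying supermodularity of $\phi\circ W$ with $A=\{1,\dots,n-1\}$, $B=\{1,\dots,n\}$, $i=n+1$ and taking the conditional expectation on $\mathcal{F}'$ gives
\[
\hat\Psi(n+1)-\hat\Psi(n)\ \ge\ \E\bigl[(\phi\circ W)(\{1,\dots,n-1,n+1\}) - (\phi\circ W)(\{1,\dots,n-1\}) \bigm| \mathcal{F}'\bigr]\ =\ \hat\Psi(n)-\hat\Psi(n-1),
\]
where the last equality uses that $(S^{x^*,n},h^{x^*,n})$ and $(S^{x^*,n+1},h^{x^*,n+1})$ are iid and hence identically distributed given $\mathcal{F}'$. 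This establishes the convexity of $\hat\Psi$ and completes the plan.
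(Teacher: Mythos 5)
Your outer reduction---changing the initial condition one vertex at a time, conditioning on everything except the data at $x^*$, and deducing the theorem from monotonicity and convexity of $n\mapsto \E[\phi(V_T)\mid \mathcal F',\,\xi_0(x^*)=n]$---is exactly the skeleton of the paper's second step (\thref{prop:icx} together with \thref{lem:test}), and your monotonicity claim for $W$ is fine, since it follows pointwise from the count monotonicity of \cite[Lemma~3]{CRS}. The gap is the central claim that $W$ is almost surely supermodular under the coupling in which all four systems are driven by the common instructions $(S,h)$, each particle $j$ at $x^*$ always following its own path $S^{x^*,j}$. That claim is false, and no braveness bookkeeping can repair it, because the failure occurs at the level of the values of $W$ themselves. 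Take $A=\emptyset$, $H=\{0\}$, a single $B$-particle at a site $z$, and one background $A$-particle $P$ whose path reaches $z$ at time $10$ and afterwards accumulates occupation $M>0$ in $H$ by time $T$; let the paths of particles $1$ and $2$ at $x^*$ both pass through $z$ (at times $1$ and $2$, say) and avoid $H$ entirely up to time $T$. Then $W(\emptyset)=0$, while $W(\{1\})=W(\{2\})=M$ because the added particle clears $z$ and saves $P$; but $W(\{1,2\})=M$ as well, since the two added particles can only save $P$ once. Hence $W(\{1,2\})+W(\emptyset)=M<2M=W(\{1\})+W(\{2\})$: on this positive-probability event the set function is strictly \emph{sub}modular.

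This is precisely the obstruction flagged in Section~\ref{sec:overview}: the naive common-driver coupling satisfies the first-order condition \ref{i:Phi.1st.order} of \thref{def:icx.stat} but not the second-order condition \ref{i:Phi.2nd.order}. The paper's fix is to give up almost-sure identity with the four ``real'' systems and instead build the tracer and flipped tracer systems of Section~\ref{def:tracer}, in which the two added particles receive the two lowest bravenesses, are paused rather than destroyed upon meeting a $B$-particle, and may later resume their paths by exchanging identities with background $A$-particles; crucially, one added particle is prioritized in one auxiliary system and the other in the other, so that $\Phi^X$ and $\Phi^Y$ are read off from two \emph{different} couplings. The resulting quadruple matches the true zero-, one-, and two-extra-particle systems only in distribution (\thref{prop:zeta}), but its second difference reduces to $\int_{\widehat L_T^Y}^{L_T^Y}\ind{Y_t\in H}\,dt\ge 0$ by \thref{lem:VT} and \thref{lem:longer.life}. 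Your final conditional-expectation step would go through essentially verbatim once such a coupling is in hand, but as written the key inequality of your proposal does not hold.
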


\begin{proof}
The result follows when $\sum_{x \in G} |\xi_0(x)|<\infty$ and $\sum_{x \in G} |\xi_0'(x)| <\infty$ from \thref{prop:i_ii} and \thref{prop:icx}. If there are infinitely many $A$-particles present, then
the finite approximation assumption in the definition of regularity
ensures that $V_T$ is realized as the limit as $R \to \infty$ of the systems that only contain the particles from $\xi_0$ that lie in the balls $\cup_{x \in H} \mathbb B(x, R)$. Taking such a limit gives the statement for $V_T$ when the initial configuration contains infinitely many particles.
\end{proof}

Our proof comes in two parts. First we show that $V_T$ is an \emph{icx statistic} (see
\thref{def:icx.stat,prop:i_ii}), and then we show a generalized version of \thref{thm:main}
that holds for all icx statistics. This opens the door to establishing that other statistics
besides $V_T$ respect the icx order; see Section~\ref{sec:further} for more discussion.

The analogue of \thref{thm:main} for the standard order holds immediately by monotonicity
of the process with respect to its initial conditions \cite[Lemma~3]{CRS}.
Because our result uses the weaker icx order, it applies more broadly. In particular,
two different distributions with the same mean can be comparable in the icx order but are never
comparable in the standard order.

To illustrate our theorem and begin our discussion of why the icx order is natural to consider,
we give an example where $\xi_0\icx \xi'_0\icx\xi''_0$ and the conclusion $V_T\icx V_T'\icx V_T''$
can be seen by direct calculation.

\begin{example} \thlabel{ex:icx} Set $G=\mathbb Z$ and $H=0$. Let $S$ consist of discrete symmetric nearest neighbor random walk paths. Now, we define systems in this environment with three different
sets of initial conditions. The DLAS $\xi$ starts with one $B$-particle at position~$1$
and one $A$-particle at position~$2$. For $\xi'$, we place either zero or two $B$-particles at position~$1$
with equal probability
and one $A$-particle at position~$2$.
And $\xi''$ begins with $0$ or $2$ $B$-particles at $1$ with equal probability and $0$ or $2$
$A$ particles at position~$1$ with equal probability. In all three systems, there are no particles
initially outside of positions~$1$ and $2$. To summarize,
\begin{itemize}
  \item $\xi_0(1)=-1$, $\xi_0(2)=1$
  \item $\xi_0'(1)= \begin{cases}0&\text{with probability~$1/2$,}\\-2&\text{with probability~$1/2$,}\end{cases}$ and $\xi'_0(2)=1$, 
  \item $\xi''(1) = \begin{cases}0&\text{with probability~$1/2$,}\\-2&\text{with probability~$1/2$,}\end{cases}$ and $\xi_0''(2)= \begin{cases}0&\text{with probability~$1/2$,}\\2&\text{with probability~$1/2$;}\end{cases}$
  \item and $\xi_0(x)=\xi'_0(x)=\xi''_0(x)=0$ for $x\notin\{1,2\}$.
\end{itemize}

We have $\E \xi_0(x) = \E \xi_0'(x)=\E\xi_0''(x)$ for all $ x \in G$, and Jensen's inequality confirms the intuitively obvious fact that $\xi_0$, $\xi'_0$, and $\xi''_0$ are increasingly volatile, i.e., $\xi_0 \icx \xi_0'\icx\xi_0''$. Observe that $V_T=0$ a.s., while
\begin{align*}
  V_T'&=\begin{cases}0&\text{with probability~$1/2$,}\\L_T&\text{with probability~$1/2$,}\end{cases}\\
    \intertext{and}
  V_T''&=\begin{cases}0&\text{with probability~$3/4$,}\\L_T+L_T'&\text{with probability~$1/4$,}\end{cases}
\end{align*}
where $L_T$ and $L'_T$ are the local times at $0$ up to time~$T$ of two independent random walks started at
$2$. We leave it as an exercise to compute directly that $V_T\icx V_T'\icx V_T''$ (we note
that no knowledge of the distribution of $L_T$ is needed).
We also observe that $\E V_T < \E V_T' = \E V_T''$, demonstrating that the expectation of $V_T$
can but need not strictly increase when the initial conditions increase in the icx order.
\end{example}
The reason that $V_T$ is increasing with respect to changes in the initial configuration in the icx
order is because of a convexity property inherent to the model.
Essentially, the gain to $V_T$ when adding two $A$-particles to the system
is more than the sum of gains from each $A$-particle alone.
As we can see in the previous example, this is because one $A$-particle can help
the other by eliminating obstacles on its behalf.
More discussion is presented at the start of Section~\ref{sec:overview}.

\subsection{Applications}
\thref{thm:main} applies to the systems considered in \cite{curien2019phase, parking, dlas_star, johnson2020particle, bahl2019parking, CRS,rivera2019dispersion}. This includes a broad class of transitive unimodular graphs and Galton-Watson trees. Such graphs are good to keep in mind, but we note that significantly more general graphs are also covered since we require minimal regularity.

One application of \thref{thm:main} that follows from \thref{ex:1} (i), (iii), and (iv) is that increasing the initial condition in the icx order has the following effects.

\begin{corollary} \thlabel{cor:visits}
 Fix $H \subseteq G$. If \eqref{eq:fin} and \eqref{eq:comparable} hold and  $\xi_0 \pmicx \xi_0'$ then:
 \begin{enumerate}[label = (\roman*)]
     \item $\E V_T \leq \E V'_T$ for all $T \geq 0$, 
     \item $\P(V_\infty = \infty) \leq \P(V'_\infty= \infty)$, and
     \item $\P(V_T = 0) \geq \P(V_T' =0)$ for all $T \geq 0$.
\end{enumerate} 
\end{corollary}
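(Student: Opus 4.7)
The plan is to deduce all three conclusions as direct consequences of \thref{thm:main} combined with the observations in \thref{ex:1}. By \thref{thm:main}, the hypotheses already give $V_T\icx V'_T$ for every $T\geq 0$, so the work is to translate this single icx relation into the three specific inequalities claimed.

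For part (i), I would simply note that the identity map $\phi(x)=x$ is convex and increasing, so the definition of the icx order applied to $V_T\icx V'_T$ yields $\E V_T\leq \E V'_T$; this is exactly the content of \thref{ex:1}(i).

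For part (iii), both $V_T$ and $V'_T$ are nonnegative by construction, so \thref{ex:1}(iii) applies: for every $t\in[0,1]$ one has $\E t^{V_T}\geq \E t^{V'_T}$, and letting $t\downarrow 0$ and using dominated convergence (the integrand is bounded by $1$) identifies the limit with $\P(V_T=0)$ and $\P(V'_T=0)$ respectively.

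For part (ii), the key preliminary observation is that $T\mapsto V_T$ is monotone nondecreasing since the integrand in \eqref{eq:V} is nonnegative; hence $V_\infty:=\lim_{T\to\infty}V_T$ exists almost surely in $[0,\infty]$, and similarly for $V'_\infty$. With this monotone almost sure convergence in hand, the families $(V_T)_{T\geq 0}$ and $(V'_T)_{T\geq 0}$ meet the hypotheses of \thref{ex:1}(iv), whose conclusion is exactly $\P(V_\infty=\infty)\leq \P(V'_\infty=\infty)$. No real obstacle arises; the only thing to be mindful of is the monotone convergence argument needed to pass from finite $T$ to $T=\infty$ in (ii), but this is immediate from the form of \eqref{eq:V}.
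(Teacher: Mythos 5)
Your proposal is correct and follows exactly the route the paper intends: the corollary is stated in the paper as an immediate consequence of \thref{thm:main} together with \thref{ex:1} (i), (iii), and (iv), which is precisely the deduction you carry out (with the added, harmless observation that $T\mapsto V_T$ is nondecreasing so that $V_\infty$ exists in $[0,\infty]$).
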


%
To make \thref{thm:main} more concrete we derive as a consequence that taking $\xi_0(x)$ to be independent $\pm 1$-valued random variables minimizes $V_T$ in the icx order across all other initial configurations with the same mean number of particles at each site. Note that such initial configurations are used in \cite{parking,parking_on_integers, johnson2020particle, tree, bahl2019parking}.

\begin{corollary}\thlabel{cor:concrete}

Fix a graph $G$, a finite subset $H\subseteq G$, and possibly distinct values $p_x \in [0,1]$ for each $x \in G$. Let $U(x)$ be independent random variables uniformly distributed on $(0,1)$. Let $\alpha(x)$ and $\beta(x)$ be independent and identically distributed random variables on the nonnegative integers with mean 1.
For all $x \in G$ define 
\begin{align} 
    \xi_0(x) &= \ind{U(x) \leq p_x} - \ind{U(x) > p_x}\\
    \xi_0'(x)&= \ind{U(x)\leq p_x} \alpha(x) - \ind{U(x) > p_x} \beta(x)
\end{align}
Suppose that $(\xi_0, S,h)$ and $(\xi_0', S, h)$ are regular and let $V$ and $V'$ be the total occupation time of $H$ for the two systems, respectively. It holds that $V \icx V'$. 

\end{corollary}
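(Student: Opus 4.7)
The plan is to apply \thref{thm:main} with the stated initial conditions $\xi_0$ and $\xi_0'$. To do so, I must check regularity and conditions \eqref{eq:fin} and \eqref{eq:comparable}, and then verify that $\xi_0 \pmicx \xi_0'$. Regularity and \eqref{eq:comparable} are essentially built into the construction: because $(U(x),\alpha(x),\beta(x))$ are independent across $x\in G$, the marginals $\xi_0(x)$ and $\xi_0'(x)$ are independent over $x$, and the two systems are already paired with the same instructions $(S,h)$. Condition \eqref{eq:fin} is automatic since $H$ is assumed finite.

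The only substantive step is to prove that $\xi_0(x) \icx \xi_0'(x)$ for each $x \in G$, from which $\xi_0\pmicx\xi_0'$ follows by definition. Fix an increasing convex $\phi\colon\mathbb R\to\mathbb R$. Conditioning on the Bernoulli variable $\ind{U(x)\leq p_x}$ gives
\begin{align}
\E\phi(\xi_0(x)) &= p_x \phi(1) + (1-p_x)\phi(-1), \\
\E\phi(\xi_0'(x)) &= p_x \E\phi(\alpha(x)) + (1-p_x)\E\phi(-\beta(x)).
\end{align}
Applying Jensen's inequality to each term on the right, using $\E\alpha(x)=\E\beta(x)=1$, yields $\E\phi(\alpha(x))\geq \phi(1)$ and $\E\phi(-\beta(x))\geq \phi(-1)$. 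Summing the weighted inequalities gives $\E\phi(\xi_0(x))\leq \E\phi(\xi_0'(x))$, so $\xi_0(x)\icx \xi_0'(x)$. With $\xi_0\pmicx\xi_0'$ in hand, \thref{thm:main} delivers $V\icx V'$.

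I do not foresee a significant obstacle. The content of the argument is the intuitive fact that replacing a deterministic value by any random variable with the same mean is an icx increase on the right-hand branch and on the left-hand branch of the mixture, and the conditioning decomposition makes this transparent. One mild subtlety is that $\xi_0$ and $\xi_0'$ are coupled by sharing the same $U(x)$, but since the icx order depends only on marginals, the coupling plays no role in the comparison. If anything, the only bookkeeping point is confirming that $(\xi_0',S,h)$ qualifies as regular, which is immediate once independence across sites is noted.
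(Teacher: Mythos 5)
Your proof is correct, and it reaches the same destination as the paper's proof by a slightly different and more self-contained route. Both arguments reduce the corollary to the single-site comparison $\xi_0(x)\icx\xi_0'(x)$ and then invoke \thref{thm:main}. The paper establishes that comparison by proving the two one-sided dominations $\ind{U(x)\leq p_x}\icx\ind{U'(x)\leq p_x}\alpha'(x)$ and $-\ind{U(x)>p_x}\icx-\ind{U'(x)>p_x}\beta'(x)$ via citations to \cite[Proposition 15(b)]{JJ} and \cite[Theorem 4.2.A]{SS}, and then combines them using closure of the icx order under mixtures \cite[Theorem 4.A.8(b)]{SS}. You instead condition on the Bernoulli variable $\ind{U(x)\leq p_x}$ and apply Jensen's inequality on each branch; this is essentially the mixture-closure argument made explicit, with the external lemmas replaced by a two-line computation. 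A bonus of your version is that you never use monotonicity of $\phi$, so you actually establish the stronger convex-order domination $\xi_0(x)\preceq_{cx}\xi_0'(x)$. Two small bookkeeping remarks: your branch decomposition $\E\phi(\xi_0'(x))=p_x\E\phi(\alpha(x))+(1-p_x)\E\phi(-\beta(x))$ tacitly uses that $\alpha(x)$ and $\beta(x)$ are independent of $U(x)$, which is the intended reading of the hypotheses (the paper's proof relies on the same independence through its primed copies); and regularity of $(\xi_0',S,h)$ is assumed in the statement of the corollary, so there is nothing to verify there.
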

\begin{proof}
  The results \cite[Proposition 15 (b)]{JJ} and \cite[Theorem 4.2.A]{SS} are easily adapted to prove that 
  \begin{align} 
  \ind{U(x) \leq p_x} &\icx \ind{U'(x) \leq p_x} \alpha'(x)\\
  -\ind{U(x) > p_x} &\icx - \ind{U'(x) >p_x} \beta'(x).
  \end{align}
Summing both sides and applying closure under mixtures of the icx order \cite[Theorem 4.A.8 (b)]{SS} gives that $\xi_0(x) \icx \xi_0'(x).$ \thref{thm:main} implies $V \icx V'$. 
\end{proof}

As mentioned below \eqref{eq:V}, the case $H=G$ gives the aggregate time $A$-particles are in motion. Suppose further that $G$ is a rooted graph with all non-root sites initially containing one $B$-particle and $n\geq0$ $A$-particles at the root. Using coupling methods different from ours, the quantity $V_T$ in this setup, referred to as $W$ in \cite{rivera2019dispersion}, was shown in  \cite[Lemma 4.4]{rivera2019dispersion} to have the same distribution as $L(n)$ the combined path length of the particles
in $n$-iterations of the usual, sequential version of internal diffusion-limited aggregation \cite{lawler1992internal, jerison2012logarithmic}. The authors of \cite{rivera2019dispersion} noted that this equivalence ``motivates the study of $W$ for general graphs...''
In this line, we obtain the following consequence of \thref{thm:main}.


\begin{corollary} \thlabel{cor:IDLA}
Let $L(n)$ be the combined lifespans of $n$ particles executing internal diffusion-limited
aggregation from the root of a given graph. Let $\eta$ and $\eta'$ be nonnegative random variables. If $\eta\icx \eta'$, then $L(\eta)\icx L(\eta')$.
\end{corollary}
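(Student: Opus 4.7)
The plan is to recast the combined lifespan $L(\zeta)$ as the $T\to\infty$ limit of the occupation-time statistic $V_T$ for a canonical DLAS, apply \thref{thm:main} in that setting, and then pass to the limit in $T$.

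Setup: fix the rooted graph and call its root $o$. For each nonnegative integer-valued random variable $\zeta$, consider the DLAS with initial conditions $\xi_0^\zeta(o)=\zeta$ and $\xi_0^\zeta(x)=-1$ for all $x\neq o$, paired with simple random walk instructions $S$ and i.i.d.\ uniform bravenesses $h$; the initial conditions are independent across sites because only the root is random. Take $H=G$. Since $\zeta<\infty$ almost surely there are only finitely many $A$-particles, so \eqref{eq:fin} holds and the regularity in \thref{rmk:finite.approximation} is straightforward. By \cite[Lemma~4.4]{rivera2019dispersion}, the statistic $V_T=V_T(G,\xi_0^\zeta,S,h)$ is nondecreasing in $T$, and its almost-sure limit $V_\infty$ is equidistributed with $L(\zeta)$; the underlying reason is the abelian property of IDLA, which ensures that the total in-motion time in the parallel DLAS evolution matches the combined path length of the sequential IDLA evolution.

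Next, assume $\eta\icx\eta'$. Then $\xi_0^\eta(o)=\eta\icx\eta'=\xi_0^{\eta'}(o)$, while off the root the two initial conditions are identical, so $\xi_0^\eta\pmicx\xi_0^{\eta'}$. Coupling both systems to share the instructions $(S,h)$ ensures \eqref{eq:comparable}. Applying \thref{thm:main} then yields $V_T\icx V_T'$ for every $T\geq 0$.

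Finally, send $T\to\infty$. Because $V_T$ is nondecreasing in $T$, monotone convergence gives $\E\phi(V_T)\uparrow\E\phi(V_\infty)$ for every nonnegative increasing convex $\phi$ on $[0,\infty)$; the general increasing convex case reduces to this by adding a constant. Passing to the limit in $\E\phi(V_T)\leq\E\phi(V_T')$ and invoking the identifications $V_\infty\eqd L(\eta)$ and $V_\infty'\eqd L(\eta')$ yields $L(\eta)\icx L(\eta')$. The most delicate step is this limit passage; it closely mirrors \thref{ex:1}(iv) but is simpler here thanks to the monotonicity of $V_T$ in $T$. A secondary check is the identification of $V_\infty$ with $L(\zeta)$, but this is precisely the content of Rivera et al.'s lemma.
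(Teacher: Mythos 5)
Your proposal is correct and follows exactly the route the paper intends: identify the combined IDLA lifespan $L(\zeta)$ with the $T\to\infty$ limit of $V_T$ for $H=G$ in the DLAS with $\zeta$ $A$-particles at the root and one $B$-particle per non-root site (via \cite[Lemma 4.4]{rivera2019dispersion}), apply \thref{thm:main}, and pass to the limit using monotonicity of $V_T$ in $T$. The only cosmetic slip is that the monotonicity of $T\mapsto V_T$ is immediate from the definition \eqref{eq:V} (a nonnegative integrand integrated over $[0,T]$) rather than something supplied by the cited lemma, whose role is solely the distributional identification $V_\infty\eqd L(\zeta)$.
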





\subsection{Proof overview} \label{sec:overview}

The first and main part of the proof is \thref{prop:i_ii}, where we show $V_T$ is an \emph{icx statistic} 
(see \thref{def:icx.stat}). The essence of the definition is that the statistic increases with the addition of a single $A$-particle (condition (d)), and that the increase from adding two $A$-particles at once is greater than the increase from adding two $A$-particles individually in two separate systems (condition (e)). Our proof amounts to confirming the intuition that one of the extra particles may clear space for the other by annihilating with a $B$-particle that would destroy each particle separately in individual augmented systems.
This property helps explain why greater volatility in the initial configuration causes an icx statistic
to increase. After all, if the gain from adding two particles is more than twice that of adding one,
we would rather have two particles or none at a site with probability $1/2$ than one particle
with probability $1$ as illustrated in \thref{ex:icx}.

To establish these properties formally, we introduce couplings of systems with  extra $A$-particles.  
Our approach is inspired by the \emph{tracer} construction from \cite{CRS} but differs slightly. In light of conditions \ref{i:Phi.1st.order} and \ref{i:Phi.2nd.order} from \thref{def:icx.stat}, we need a way to compare what happens in systems with extra $A$-particles. 
If we simply add two $A$-particles to the system and let all other particles proceed with their original
instructions, we obtain a coupling
satisfying \ref{i:Phi.1st.order}, the monotonicity requirement, but not
\ref{i:Phi.2nd.order}, the convexity requirement. 
To make the convexity requirement hold, we must prioritize one of the added
$A$-particles over the other, which is where our paper diverges from \cite{CRS}. 
For example, it may happen that one of the extra $A$-particles, call it $X$, destroys a $B$-particle at $x$, thus allowing the other extra $A$-particle, call it $Y$, to survive a later visit to $x$. When $Y$ arrives to $x$ the law of the DLAS is preserved whether we allow $Y$ to continue its assigned path, or if we instead have it pick up and extend the path of $X$. 
We call this process either the \emph{tracer system} or the \emph{flipped tracer system} depending
on which of the two extra $A$-particles we prioritize.
We will leave the details to the next section, but from the tracer system we track the effect
of adding $Y$ after $X$, while from the flipped tracer system we track the effect of adding
$Y$ alone. We then prove in \thref{lem:difference} that $Y$ traverses more of its preassigned
path if it is added after $X$, thus establishing \ref{i:Phi.2nd.order}. 
We note that this comparison fails when $B$-particles move,
as we discuss in \thref{rem:B}. 

The second part of the proof is to show that icx statistics are monotone under changes
to the initial conditions in the icx order. 
We do this under the assumption that the system has only finitely many particles
(\thref{prop:icx}). From there it is straightforward to extend the result to infinite systems.

The overall structure of the proof is similar to the one used in \cite{JJ} to prove a comparison result for the frog model.
Some differences are that: (a) here we consider a two-type particle system, 
(b) we use the icx order rather than the increasing concave (icv) order, and 
(c) the definition of icx statistic is more complicated than the analogous definition in \cite{JJ}.

To address difference~(a), 
we make use of the basic equivalence between removing $B$-particles as adding $A$-particles. So, it is sufficient to restrict our focus to the impact of adding $A$-particles as discussed in the previous paragraph. This makes for no technical difficulty.
Difference~(b) is a consequence of differences between DLAS and the frog model. Superadditive statistics
(see \thref{def:icx.stat}) behave well for DLAS as opposed to subadditive ones for the frog
model. But this difference barely affects the proofs because the icx and icv orders are interchangeable via \cite[Theorem 3.A.1]{SS}.

Difference~(c) is more substantial. 
For all icv statistics considered in \cite{JJ}, it is easy to prove that they are so. In contrast,
establishing that $V_T$ is an icx statistic is most of the work of this paper. The difference is
that statistics in the frog model behave subadditively when adding particles to the system in
the most straightforward way. On the other hand, the statistics considered in this paper
behave superadditively only when particles are added via the complicated coupling described
in Section~\ref{sec:couplings}.

We further remark that \cite{JJ} considers a weaker stochastic order, the \emph{probability generating function order} (pgf order), alongside the icv order. We say that $X$ is dominated by $Y$ in the pgf order if the probability generating function $\E t^X$ of $X$ is pointwise at least as large as that of $Y$ for $t \in [0,1]$.
With more technical difficulty, it is proven in \cite{JJ} that the number of visits to the root in the frog model also respects this stochastic order. 
The analogous generalization to the convex version of the pgf order for \thref{thm:main} would be to prove that $V_T$ respects the stochastic order
defined by $\E\varphi(X)\leq\E\varphi(Y)$ holding for all smooth functions $\varphi$ with all
derivatives positive.
It is quite possible that this is true and could be proven by the methods of this paper,
but we have not attempted it since there is no obvious application of the result.

\section{Icx statistics and tracer couplings} \label{sec:couplings}
We begin with a definition that we will relate to the icx order in the next section. Throughout this section we assume that any DLAS under discussion has finitely many $A$- and $B$-particles in the initial configuration. 

\begin{definition}\thlabel{def:icx.stat}
  Let $(\xi_0,S,h)$ be regular with $\sum_{x \in G} |\xi_0(x)| < \infty$ a.s. Let $\xi_{0,k}$ be the same as $\xi_0$ except that
  $\xi_{0,k}(x)=k$ for some given $x\in G$.
  We call a functional $f$ an \emph{icx statistic} if for all $x\in G$ and $k\in \mathbb Z$
  there exists a coupling $(\Phi,\Phi^X,\Phi^Y,\Phi^{X,Y})$ such that
  \begin{enumerate}[(a)]
    \item $\Phi\eqd f(\xi_{0,k},S,h)$\label{i:Phi}
    \item $\Phi^X\eqd\Phi^Y\eqd f(\xi_{0,k+1},S,h)$\label{i:Phi.X}
    \item $\Phi^{X,Y}\eqd f(\xi_{0,k+2},S,h)$\label{i:Phi.X.Y}
    \item $\Phi^X\geq\Phi$ a.s.\ and $\Phi^Y\geq\Phi$ a.s.\label{i:Phi.1st.order}
    \item $\Phi^{X,Y}-\Phi^X-\Phi^Y+\Phi\geq 0$ a.s. \label{i:Phi.2nd.order}
  \end{enumerate}
\end{definition}

The purpose of this section is to prove the following:
\begin{proposition} \thlabel{prop:i_ii}
  $V_T$ defined at \eqref{eq:V} is an icx statistic.
  
\end{proposition}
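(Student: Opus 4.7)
My plan is to construct a coupling of $\Phi,\Phi^X,\Phi^Y,\Phi^{X,Y}$ on a single probability space and verify conditions (a)--(e) pointwise. Fix the instructions $(S,h)$ and the portion of the initial configuration at sites other than $x$. With the $k$ base particles at $x$ following paths $S^{x,1},\ldots,S^{x,k}$, run the DLAS and let $\Phi$ be its occupation time. Build $\Phi^X$ by adding a single extra particle at $x$ with path $S^{x,k+1}$, build $\Phi^Y$ by instead adding a single extra particle at $x$ with path $S^{x,k+2}$, and build $\Phi^{X,Y}$ by adding both, with paths $S^{x,k+1}$ and $S^{x,k+2}$ respectively. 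Since $S^{x,k+1}$ and $S^{x,k+2}$ are i.i.d., exchanging their labels is a measure-preserving bijection, so $\Phi^X \eqd \Phi^Y \eqd f(\xi_{0,k+1},S,h)$ and $\Phi^{X,Y} \eqd f(\xi_{0,k+2},S,h)$, which establishes (a)--(c). Condition (d) is the standard monotonicity for DLAS: $A$-particles annihilate only with $B$-particles, so an extra $A$-particle can only consume additional $B$-particles and extend lifespans of the rest; a short induction over the totally ordered sequence of annihilation events (essentially \cite[Lemma~3]{CRS}) yields $\Phi^X,\Phi^Y\geq \Phi$ pointwise.

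Condition (e), the superadditivity $\Phi^{X,Y}-\Phi^X\geq \Phi^Y-\Phi$, is the heart of the proposition. Its intuitive content is that the marginal gain in $V_T$ from inserting $Y$ on top of $\Phi^X$ is at least as large as the marginal gain from inserting $Y$ on top of $\Phi$: in the former, $X$ (and the $A$-particles whose lifespans $X$ has already extended) has consumed some of the $B$-particles lying along $Y$'s preassigned path $S^{x,k+2}$, so $Y$ survives longer, which in turn liberates further $A$-particles, and so on in a cascade. To turn this into a pointwise bound I would implement the tracer/flipped-tracer scheme outlined at the end of Section~\ref{sec:overview}. Prioritizing $X$ over $Y$, one exploits the exchangeability of $A$-particle labels: whenever $Y$ reaches a site where $X$ (or an $A$-particle whose lifespan $X$ has extended) has previously destroyed a $B$-particle, reassign paths so that $Y$ picks up and extends the unused portion of that earlier particle's path. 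Because $B$-particles do not move, a site cleared at one time stays cleared, which is both what makes this reassignment law-preserving and what prevents the reassigned $Y$ from re-encountering a resurrected obstacle. Performed without $X$ in the $\Phi$-baseline, the analogous scheme yields the flipped tracer. The crucial intermediate step, a lemma of the form ``$Y$ traverses more of its preassigned path when added after $X$,'' then converts the tracer comparison into the pointwise bound $\Phi^{X,Y}-\Phi^X\geq \Phi^Y-\Phi$.

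The main obstacle I anticipate is making this tracer bookkeeping precise. Each individual swap is law-preserving, but the swaps must be orchestrated coherently across the four coupled systems so that all four marginal laws are correct and so that a comparison of $Y$'s traversal upgrades to a comparison of the full occupation time $V_T$, which also records the cascading extensions of the other $A$-particles' lifespans. Prioritizing $X$ over $Y$ is essential: a symmetric rule would let $X$- and $Y$-induced gains interfere, and the convexity could fail. The stationarity of $B$-particles is equally essential and indeed is precisely what distinguishes this setting from the variant (with mobile $B$-particles) in which the analogous comparison is known to break down.
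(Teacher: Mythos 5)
Your plan coincides with the paper's strategy: conditions (a)--(d) via adding particles that follow i.i.d.\ extra paths, and condition (e) via a tracer system prioritizing $X$ and a flipped tracer system prioritizing $Y$, reduced to a lemma asserting that the $Y$-tracer traverses more of its preassigned path when $X$ is present and prioritized (\thref{lem:longer.life} in the paper). You also correctly identify the two essential features, asymmetric prioritization and stationary $B$-particles. The difficulty is that the proposal defers precisely the content of the proposition: the coupling is not pinned down and the key lemma is asserted rather than proved.

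Two concrete gaps. First, the ``law-preserving path reassignment'' must be organized so that a \emph{single} run of the prioritized system simultaneously yields $\Phi$, $\Phi^X$, and $\Phi^{X,Y}$ with all three marginal laws correct (a separate naive coupling for (a)--(d) and a different one for (e) does not satisfy \thref{def:icx.stat}). The paper achieves this by giving each tracer an $A$/$B$ state and a life clock $L^X_t,L^Y_t$, including the swap rule \ref{i:priority} for when one tracer in state~$A$ meets the other in state~$B$; verifying \thref{prop:zeta} then takes the case analysis of Figures~\ref{fig:nontracer.jump} and \ref{fig:tracer.jump}, and \thref{rmk:CRS.comparison} notes that the closely related dragged-tracer coupling of \cite{CRS} can couple any two of the three systems but not all three at once, so this is not routine bookkeeping. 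Second, the cascade heuristic ($X$ clears obstacles, so $Y$ survives longer, so further particles are liberated, and so on) does not by itself prove $L^Y_t\geq\widehat L^Y_t$. The ingredient that makes the comparison possible is the a.s.\ identity $\zeta_t=\widehat\zeta_t=\xi_t$: the nontracer particles evolve \emph{identically} in the tracer and flipped tracer systems once a tracer in state~$B$ is counted as a $B$-particle. Granting that invariance, the lemma follows by contradiction at the first time $L^Y_t<\widehat L^Y_t$, since both $Y$-tracers would then sit at the same site with the value of $\xi_t$ there simultaneously negative and nonnegative. Your proposal never isolates this invariance, and without it the two systems cannot be compared pointwise. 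Finally, upgrading the longer-life statement to $\Phi^{X,Y}-\Phi^X-\Phi^Y+\Phi\geq0$ requires the exact identities of \thref{lem:VT}, expressing each occupation-time difference as an integral of $\ind{\cdot\in H}$ over the relevant life interval; these too rest on a short case analysis that the proposal leaves open.
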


\subsection{The tracer system and the flipped tracer system } \label{def:tracer}

We now introduce the coupling we use to track the effect of adding extra particles
at a given vertex $x$.
We start with a regular system $(\xi_0,S,h)$. Let $k=\max(\xi_0(x)+1, 1)$, and let
$X=S^{x,k}$ and $Y=S^{x,k+1}$.
We will describe two modified versions of the underlying DLAS
where we add particles with paths $X$ and $Y$ with special behavior.

  In both systems,
  start with the initial conditions given by $\xi_0$ together with two special particles
  at a given site~$x$ that we call the $X$-tracer and the $Y$-tracer. 
  Each tracer has two possible states, $A$ and $B$.
  The nontracer particles follow their assigned instructions from $(S,h)$.
  The $X$- and $Y$-tracers follow paths $X$ and $Y$, respectively, when they are in state~$A$. 
  When a tracer enters state~$B$, it pauses and remains stationary until it enters state~$A$ again,
  at which point it continues following its path $X$ or $Y$ starting from when it was paused.
  
  In the \emph{tracer system}, we assign the $X$-tracer braveness $-2$ and the $Y$-tracer braveness $-1$; that is,
  the $Y$-tracer has lower braveness than all other particles except for the $X$-tracer, which has
  the lowest braveness. In the \emph{flipped tracer system}, we assign the $X$-tracer braveness $-1$ and
  the $Y$-tracer braveness $-2$. The tracer system gives the $X$-tracer priority to be in state $A$, while the flipped tracer system prioritizes the $Y$-tracer. In both systems tracers start out in state $A$.

  In the following interaction rules for the system's evolution,
  we treat a tracer as if were an $A$-particle when it is in state~$A$ and as a $B$-particle when it is
  in state~$B$. If $A$-particles and $B$-particles finds themselves together on a site, then the bravest
  $A$-particle and the bravest $B$-particle interact as follows:
  \begin{enumerate}[(a)]
    \item If neither particle is a tracer, then they interact as usual by mutual annihilation.
      \label{i:nontracers}
    \item If the $A$-particle is a tracer and the $B$-particle is not, then the $B$-particle annihilates
      and the tracer particle switches to state~$B$.
    \item If the $B$-particle is a tracer and the $A$-particle is not, then the $A$-particle annihilates
      and the tracer particle switches to state~$A$.
    \item If both particles are tracers, then the $Y$-tracer takes priority to be in state~$A$ in the
      tracer system, while the $X$-tracer takes priority to be in state~$A$ in the flipped tracer
      system.
      That is, in the tracer system, if the $X$-tracer is in state~$A$ and the $Y$-tracer is in state~$B$, 
      then both tracers switch states; 
      if the $X$-tracer is in state~$B$ and the $Y$-tracer is in state~$A$, then no
      interaction occurs.\label{i:priority}
  \end{enumerate}
  The interactions repeat until there are no $A$- and $B$-particles together on the site except
  possibly the two tracers with the prioritized tracer in state~$A$ and the nonprioritized tracer in state~$B$.
  Both tracers start in state~$A$ when $\xi_0(x)\geq 0$. If $\xi_0(x)=-1$, then the $X$-tracer
  starts in state~$B$ and the $Y$-tracer starts in state~$A$, and the $B$-particle at $x$
  is annihilated at time~$0$. If $\xi_0(x)\leq -2$, then both tracers start in state~$B$,
  and the two $B$-particles at $x$ with the greatest braveness are annihilated at time~$0$.

  In the tracer system, let $A_t^X$ and $A_t^Y$ be the events that the $X$- and $Y$-tracers, respectively, 
  are in state~$A$ at time $t$. Let $B_t^X$ and $B_t^Y$ be the complements of $A_t^X$
  and $A_t^Y$. We define the \emph{life} of each tracer up to time $T$ as 
\begin{align}
    L^X_T = \int_0^T \ind{A_t^X}\, dt  \qquad\qquad\text{and}\qquad\qquad L^Y_T &= \int_0^T \ind{A_t^Y}\, dt.\label{eq:life}
\end{align} 
These quantities represent the duration of the $X$ and $Y$ paths that each 
tracer has traveled along up to time $T$. 
Observe that the locations of the tracers at time~$t$ are $X_{L^X_t}$ and $Y_{L^Y_t}$.
Let $\widehat{A}_t^X$, $\widehat{A}_t^Y$, $\widehat{B}_t^X$, and $\widehat{B}_t^Y$ be the analogous
events for the flipped tracer system, and let the life of each tracer up to time~$t$ in the flipped system
be defined analogously and denoted by $\widehat{L}^X_t$ and $\widehat{L}^Y_t$.

In the tracer system let $\alpha_t(z)$ and $\beta_t(z)$ denote the number of nontracer $A$- and $B$-particles,
respectively, present on site~$z$ at time~$t>0$. We then define
\begin{align}
  \zeta_t(z) &= \alpha_t(z)  - \beta_t(z)
    - \1\bigl\{B_t^X\bigr\}\1\bigl\{X_{L_t^X}=z\bigr\} - \1\bigl\{B_t^Y\bigr\}\1\bigl\{Y_{L_t^Y}=z\bigr\},\label{eq:zeta}\\
  \zeta^X_t(z) &= \alpha_t(z)  - \beta_t(z) + \1\bigl\{A_t^X\bigr\}\1\bigl\{X_{L_t^X}=z\bigr\}
    - \1\bigl\{B_t^Y\bigr\}\1\bigl\{Y_{L_t^Y}=z\bigr\},\label{eq:zeta.X}\\
  \zeta^{X,Y}_t(z) &= \alpha_t(z)  - \beta_t(z) + \1\bigl\{A_t^X\bigr\}\1\bigl\{X_{L_t^X}=z\bigr\}
    + \1\bigl\{A_t^Y\bigr\}\1\bigl\{Y_{L_t^Y}=z\bigr\}.\label{eq:zeta.X.Y}
\end{align}
In short, $\zeta_t$ gives the particle counts for the tracer system with the tracer particles 
ignored in state~$A$ and counted as $B$-particles in state~$B$. Then $\zeta_t^X$ does the same
except it counts the $X$-tracer only in state~$A$, while $\zeta_t^{X,Y}$ counts both tracers only
while in state~$A$.

Finally, with $\widehat{\alpha}_t(z)$ and $\widehat{\beta}_t(z)$ denoting the counts of nontracer
$A$- and $B$-particles at site~$z$ in the flipped tracer system, we define
\begin{align}
  \widehat\zeta_t(z) &= \widehat\alpha_t(z)  - \widehat\beta_t(z)
     - \1\bigl\{\widehat B_t^Y\bigr\}\1\bigl\{Y_{\widehat L_t^Y}=z\bigr\}- \1\bigl\{\widehat B_t^X\bigr\}\1\bigl\{X_{\widehat L_t^X}=z\bigr\},\label{eq:zetarev}\\
  \widehat \zeta^Y_t(z) &= \widehat\alpha_t(z)  - \widehat\beta_t(z) + \1\bigl\{\widehat A_t^Y\bigr\}\1\bigl\{Y_{\widehat L_t^Y}=z\bigr\}
    - \1\bigl\{\widehat B_t^X\bigr\}\1\bigl\{X_{\widehat L_t^X}=z\bigr\},\label{eq:zetarev.Y}\\
  \widehat \zeta^{Y,X}_t(z) &= \widehat\alpha_t(z)  - \widehat\beta_t(z) 
    + \1\bigl\{\widehat A_t^Y\bigr\}\1\bigl\{Y_{\widehat L_t^Y}=z\bigr\}+\1\bigl\{\widehat A_t^X\bigr\}\1\bigl\{X_{\widehat L_t^X}=z\bigr\}.\label{eq:zetarev.Y.X}
\end{align}
Note the reversal of the roles of $X$ and $Y$ in \eqref{eq:zetarev}--\eqref{eq:zetarev.Y.X} as compared to \eqref{eq:zeta}--\eqref{eq:zeta.X.Y}.\noeqref{eq:zeta.X}\noeqref{eq:zetarev.Y}

\subsection{Identities}

We write $\xi^x$ for the system with the initial condition at $x$ increased by one $A$-particle so that $\alpha^x_0(x) = \alpha_0(x) +1$, $\alpha_0^x(z) = \alpha_0(z)$ for $z \neq x$, and $\beta_0^x = \beta_0$. Let $\xi^{x,x} = (\xi^x)^x$.
The relevance of the tracer system we have defined is that $\zeta$, $\zeta^X$, $\zeta^{X,Y}$,
$\widehat\zeta$, $\widehat\zeta^Y$, and $\widehat\zeta^{Y,X}$ all represent particle counts for DLAS,
as we will see in \thref{prop:zeta}. Both $\zeta$ and $\widehat\zeta$ turn out
to be identical to $\xi$. The counts $\zeta^X$ and $\widehat\zeta^Y$ are both instances of DLAS
with initial configuration $\xi_0^x$, but in $\zeta^X$ the extra particle compared to $\xi$ follows
path $X$, while in $\widehat\zeta^Y$ it follows path $Y$ (see \thref{lem:difference}).
Meanwhile $\zeta^{X,Y}$ and $\widehat\zeta^{Y,X}$ are both instances of DLAS with initial
configuration $\xi_0^{x,x}$, but they will differ slightly from each other because of prioritizing
paths $X$ and $Y$ differently.

\begin{proposition}\ \thlabel{prop:zeta}
  \begin{enumerate}[(I)]
    \item $\zeta_t(z)=\widehat\zeta_t(z)=\xi_t(z)$ for all $t>0$ and $z\in G$; \label{i:zeta}
    \item $(\zeta^X_t(z))_{t>0,z\in G} \eqd (\widehat\zeta^Y_t(z))_{t>0,z\in G}\eqd (\xi_t^x(z))_{t>0,z\in G}$; \label{i:zeta.X}
    \item $(\zeta^{X,Y}_t(z))_{t>0,z\in G} \eqd (\widehat\zeta^{Y,X}_t(z))_{t>0,z\in G}\eqd (\xi_t^{x,x}(z))_{t>0,z\in G}$. \label{i:zeta.X.Y}
  \end{enumerate}
\end{proposition}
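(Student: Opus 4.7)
My plan is to prove the six identities simultaneously via an event-by-event coupling argument. The unifying observation is that each of $\zeta, \zeta^X, \zeta^{X,Y}, \widehat\zeta, \widehat\zeta^Y, \widehat\zeta^{Y,X}$ can be viewed as the site-wise particle count of an honest DLAS: a tracer in state $A$ is either ignored or counted as an $A$-particle, and a tracer in state $B$ is either counted as a $B$-particle or ignored, depending on the quantity. Between interaction events, nontracer particles move along their paths from $(S,h)$, a state-$A$ tracer advances along its walk $X$ or $Y$, and none of the six quantities change. The task reduces to checking at each interaction event that the quantity in question evolves consistently with the correct DLAS dynamics.

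For part \eqref{i:zeta}, I would verify by direct case analysis that every elementary event in the tracer system preserves $\zeta$. When a state-$A$ tracer interacts with a nontracer $B$-particle, the $B$-particle annihilates and the tracer switches to state $B$; the nontracer $B$-count at the site decreases by $1$ while the tracer's contribution to $\zeta$, namely $-\ind{B_t^X}$ or $-\ind{B_t^Y}$, also changes by $-1$, so $\zeta$ is unchanged. The transition of a state-$B$ tracer being annihilated by an incoming nontracer $A$-particle is symmetric. The tracer-tracer swap from rule (d) produces two simultaneous state changes whose contributions to $\zeta$ cancel. Nontracer-nontracer annihilations contribute the same change to both $\zeta$ and $\xi$. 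The analogous argument applied to the flipped tracer system yields $\widehat\zeta_t(z) = \xi_t(z)$.

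For parts \eqref{i:zeta.X} and \eqref{i:zeta.X.Y}, the same bookkeeping shows that each of $\zeta^X, \zeta^{X,Y}, \widehat\zeta^Y, \widehat\zeta^{Y,X}$ evolves as a valid DLAS with one or two extra $A$-particles added at $x$. In $\zeta^X$, the $X$-tracer plays the role of the added $A$-particle: its excursions into state $B$ correspond to that particle annihilating $B$-particles, and its reactivation by an incoming nontracer $A$-particle corresponds to substituting the remaining $X$-path in place of that nontracer's path from the reactivation point onward. The Markov property and the identical distribution of the random walks from a common starting point ensure that this substitution preserves the joint law of particle counts, yielding $(\zeta^X_t(z))_{t>0,z\in G} \eqd (\xi_t^x(z))_{t>0,z\in G}$. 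For $\zeta^{X,Y}$, both tracers play the roles of extra $A$-particles, with rule (d) resolving annihilation order when both tracers coincide; the analogous argument gives the remaining distributional identities.

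The main obstacle is the case analysis for events where both tracers occupy the same site, possibly together with nontracer particles of both types, since the annihilation cascade may cycle each tracer through several state changes before terminating. The priority rule (d) is engineered precisely so that the required invariants hold for $\zeta^{X,Y}$ in the tracer system and for $\widehat\zeta^{Y,X}$ in the flipped system, even though the two processes can differ pathwise. Additional care is needed at $t = 0^+$ when $\xi_0(x) < 0$, where the initial conditions force one or both tracers to begin in state $B$; the identities in this case follow by directly accounting for the $t = 0$ annihilations specified in the construction.
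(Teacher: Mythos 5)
Your proposal is correct and follows essentially the same route as the paper: a case-by-case check that each of the six count processes evolves according to DLAS dynamics (with the tracers reinterpreted as $A$-particles, $B$-particles, or nothing depending on the quantity), combined with matching initial conditions, plus the observation that the priority rule resolves the both-tracers-on-one-site cases. The only cosmetic difference is that the paper obtains the flipped-system identities by the symmetry of exchanging the i.i.d.\ paths $X$ and $Y$ rather than repeating the case analysis, and it leaves implicit the Markov-property/path-substitution point that you spell out.
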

\begin{proof}

  
  We start by proving $\zeta^X\eqd\xi^x$ and $\zeta^{X,Y}\eqd\xi^{x,x}$.
  First, observe that $\zeta^X$, and $\zeta^{X,Y}$ have initial configurations matching $\xi^x$
  and $\xi^{x,x}$.
  Next, we consider the effect of a jump in the tracer system from the perspective
  of $\zeta^X$ and $\zeta^{X,Y}$. All possibilities when a nontracer $A$-particle
  jumps are depicted in Figure~\ref{fig:nontracer.jump} (located in the Appendix). Note that if the random walk paths are in discrete time and multiple $A$-particles arrive simultaneously, what is depicted is the last collision to be resolved with the least brave arriving $A$-particle. Examining the figure, we see
  that when such a jump occurs, the counts given
  by $\zeta_t^X$ and $\zeta_t^{X,Y}$ evolve according to the rules of DLAS.
  The possibilities when a tracer particle in state~$A$ jumps are shown in Figure~\ref{fig:tracer.jump} (located in the Appendix).
  Again, in all cases the counts given
  by $\zeta_t^X$ and $\zeta_t^{X,Y}$ evolve according to the rules of DLAS;
  the only difference with a tracer particle jumping is that in some of the cases for $\zeta^X$, 
  a jump results in no change.
  Since $\zeta^X$, and $\zeta^{X,Y}$ have initial distributions matching
  $\xi^x$ and $\xi^{x,y}$ and evolve according to the same rules, their laws match as well.
  
  Next, we show that $\zeta$ and $\xi$ are equal a.s. We do so by showing that
  the particle system counted by $\xi$ at all times matches the nontracer particles 
  in the tracer system along with
  an additional $B$-particle at the location of any tracer particle in state~$B$.
  This is initially true by definition of the tracer process. We claim that after every jump,
  it continues to hold. Indeed, we just check that this is true for each interaction type 
  \ref{i:nontracers}--\ref{i:priority} from the definition of the tracer system.
  
  Finally, we observe that $\widehat\zeta$, $\widehat\zeta^Y$, and $\widehat\zeta^{Y,X}$
  are defined identically to $\zeta$, $\zeta^X$, and $\zeta^{X,Y}$ except that the roles
  of $X$ and $Y$ are reversed. Thus $\zeta\eqd\widehat\zeta$, $\zeta^X\eqd\widehat\zeta^Y$,
  and $\zeta^{X,Y}\eqd\widehat\zeta^{Y,X}$, since the paths $X$ and $Y$ are i.i.d.
  And the almost sure equality of $\widehat\zeta$ and $\xi$ holds by the same proof as for $\zeta$
  and $\xi$.
\end{proof}

\begin{remark}\thlabel{rmk:CRS.comparison}
  The tracer and flipped tracer systems are closely related to the 
\emph{dragged tracer} construction of \cite[Section~4.1]{CRS}.
In the terminology of \cite{CRS}, a tracer in state~$A$ is \emph{following an $A$-particle} and a tracer
in state~$B$ is \emph{following a $B$-particle}.
The difference between our construction and the one in \cite{CRS} is rule~\ref{i:priority}, the prioritization of 
one tracer over the other. In the dragged tracer construction, a tracer following an $A$-particle does not
interact with one following a $B$-particle. Using this construction, parts~\ref{i:zeta} and \ref{i:zeta.X.Y}
of \thref{prop:zeta} hold, but part~\ref{i:zeta.X} fails. Essentially, our construction
allows us to simultaneously couple the systems with zero, one, and two particles added at a site.
Without it, we can couple any two of these systems, but not all three together.
\end{remark}

The tracers are so-called because they track the differences between these systems:
\begin{lemma}\thlabel{lem:difference}
  For all $t>0$ and $z\in G$,
  \begin{align}
    \zeta^{X}_t(z) - \zeta_t(z) &= \ind{X_{L_t^X}=z}, & \zeta^{X,Y}_t(z) - \zeta_t(z) &= \ind{X_{L_t^X}=z} + \ind{Y_{L_t^Y}=z},\\
    \widehat \zeta^{Y}_t(z) - \widehat\zeta_t(z) &= \ind{Y_{\widehat L_t^Y}=z}, 
     &
    \widehat\zeta^{Y,X}_t(z) - \widehat\zeta_t(z) &= \ind{Y_{\widehat L_t^Y}=z} + \ind{X_{\widehat L_t^X}=z}.
  \end{align}
\end{lemma}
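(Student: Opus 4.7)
The proof should be an essentially algebraic manipulation of the definitions \eqref{eq:zeta}--\eqref{eq:zetarev.Y.X}. The plan is to subtract the equations pairwise and use the fundamental observation that for each tracer, the state-$A$ and state-$B$ events partition the probability space at every time: $\ind{A_t^X} + \ind{B_t^X} = 1$ and $\ind{A_t^Y} + \ind{B_t^Y} = 1$ identically (and likewise for the hatted events in the flipped system). No dynamical reasoning about the evolution of the systems should be required, since the quantities involved are defined directly in terms of the tracer states.

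For the first identity, I would subtract \eqref{eq:zeta} from \eqref{eq:zeta.X}. The $\alpha_t(z)$ and $\beta_t(z)$ terms cancel, as do the $Y$-tracer contributions $-\ind{B_t^Y}\ind{Y_{L_t^Y}=z}$. What remains involves only the $X$-tracer, namely
\[
\zeta^X_t(z) - \zeta_t(z) = \bigl(\ind{A_t^X} + \ind{B_t^X}\bigr)\ind{X_{L_t^X}=z} = \ind{X_{L_t^X}=z},
\]
using complementarity of $A_t^X$ and $B_t^X$. For the second identity, subtracting \eqref{eq:zeta} from \eqref{eq:zeta.X.Y} produces the analogous combination for both tracers simultaneously:
\[
\zeta^{X,Y}_t(z) - \zeta_t(z) = \bigl(\ind{A_t^X}+\ind{B_t^X}\bigr)\ind{X_{L_t^X}=z} + \bigl(\ind{A_t^Y}+\ind{B_t^Y}\bigr)\ind{Y_{L_t^Y}=z},
\]
which simplifies to $\ind{X_{L_t^X}=z} + \ind{Y_{L_t^Y}=z}$.

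The two identities for the flipped tracer system are structurally identical: subtracting \eqref{eq:zetarev} from \eqref{eq:zetarev.Y} and from \eqref{eq:zetarev.Y.X} and invoking $\ind{\widehat A_t^X} + \ind{\widehat B_t^X} = 1$ and $\ind{\widehat A_t^Y} + \ind{\widehat B_t^Y} = 1$ yields the claimed expressions, with $L$ replaced by $\widehat L$ throughout. I do not anticipate a serious obstacle here; this lemma is really a bookkeeping identity that records how the three coupled systems differ in a form that will be convenient when proving \thref{prop:i_ii}. The only thing worth verifying carefully is that the signs line up correctly in the definitions so that the $B$-term in $\zeta_t$ and the $A$-term in $\zeta_t^X$ combine via complementarity into a single unconditional indicator.
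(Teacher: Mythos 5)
Your proposal is correct and matches the paper's proof, which simply states that the identities are direct consequences of the definitions \eqref{eq:zeta}--\eqref{eq:zeta.X.Y} and \eqref{eq:zetarev}--\eqref{eq:zetarev.Y.X}; the complementarity $\ind{A_t^X}+\ind{B_t^X}=1$ (and its analogues) is exactly the cancellation being invoked. No dynamical reasoning is needed, as you correctly anticipate.
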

\begin{proof}
  These facts are direct consequences of definitions \eqref{eq:zeta}--\eqref{eq:zeta.X.Y}
  and \eqref{eq:zetarev}--\eqref{eq:zetarev.Y.X}.
\end{proof}

\thref{lem:difference} highlights that the tracer system tracks what happens
when we add the $X$-tracer first and the $Y$-tracer second, 
while the flipped tracer system tracks what happens when the tracers are added in the opposite
order. To prove that the monotonicity condition \ref{i:Phi.2nd.order} from \thref{def:icx.stat}
holds, we use the tracer system to track the effect of adding the $Y$-tracer after
the $X$-tracer is added, and we use the flipped tracer system to track the effect of
adding the $Y$-tracer alone. The following lemma is the basis of comparison between these two effects:
\begin{lemma}\thlabel{lem:longer.life}
  It holds for all $t>0$ that $L_t^Y\geq \widehat{L}_t^Y$.
\end{lemma}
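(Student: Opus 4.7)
The plan is to establish $L_t^Y \geq \widehat{L}_t^Y$ by a coupling argument. First, I would place both the tracer and flipped tracer systems on the same probability space, so that they share the same non-tracer instructions $(S, h)$ and the same tracer paths $X, Y$. Since the two systems agree on all interaction rules except rule~\ref{i:priority}, they evolve identically until the first time both tracers occupy the same site in opposite states, at which point they diverge.

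The central idea is that rule~\ref{i:priority} prioritizes the $Y$-tracer in the tracer system but prioritizes the $X$-tracer in the flipped tracer system. Consequently, every time a tracer-tracer interaction with opposite states occurs, the $Y$-tracer ends up in state $A$ in the tracer system and in state $B$ in the flipped tracer system, which directly pushes the difference $L_t^Y - \widehat{L}_t^Y$ upward. I would make this precise by inducting on the combined sequence of jump events in the coupled pair, maintaining the invariant $L_t^Y \geq \widehat{L}_t^Y$ and verifying each event type (a non-tracer jump, a tracer jump in either state, a tracer-non-tracer collision via rule~\ref{i:nontracers}--(c), and a tracer-tracer collision via rule~\ref{i:priority}) by case analysis.

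The main obstacle is that once the two systems diverge in tracer states, the tracers absorb different non-tracer particles, so the non-tracer configurations at equal times are no longer identical. As a consequence, interactions involving $Y$ can occur at different times and locations in the two systems, and a naive event-matched coupling breaks down. Overcoming this likely requires strengthening the inductive invariant to encode how the non-tracer discrepancies relate to the tracers' trajectories — for example, showing that any "extra" non-tracer $B$-particle that survives in the flipped tracer system (because $X$ failed to absorb it there) is matched by a compensating event benefiting $Y$ in the tracer system (where $X$ did absorb it), so that the cumulative imbalance in $Y$'s pausing time never flips sign. Verifying such a strengthened invariant through all the collision cases is the bulk of the work.
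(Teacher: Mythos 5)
Your setup is right---the two systems are built from the same $(\xi_0,S,h)$ and the same paths $X,Y$---and you have correctly located the obstacle: after the first divergence the nontracer configurations of the two systems need not coincide, so an event-matched induction on jumps does not close. But the proposal stops exactly where the proof has to begin. The ``strengthened invariant'' you gesture at (matching each surviving nontracer $B$-particle in one system with a compensating event in the other) is never formulated, and you concede that verifying it through all the collision cases is the bulk of the work. As written this is a plan with the key idea missing, not a proof.

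The missing ingredient is \thref{prop:zeta}~\ref{i:zeta}: the counts $\zeta_t$ and $\widehat\zeta_t$, in which a tracer is counted as a $B$-particle while in state~$B$ and ignored while in state~$A$, are both \emph{almost surely equal} to $\xi_t$ at every site and every time. This single identity is the invariant that survives the divergence of the nontracer configurations, and once it is in hand no induction over jump events is needed. The paper argues by contradiction at the first crossing time: let $T=\inf\{t\colon L_t^Y<\widehat L_t^Y\}$ and suppose it is finite. Since both life processes are continuous, $L_T^Y=\widehat L_T^Y$, so the $Y$-tracer sits at the \emph{same} site $y=Y_{L_T^Y}$ in both systems, necessarily in state~$B$ in the tracer system and state~$A$ in the flipped one. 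A tracer in state~$B$ cannot share a site with a nontracer $A$-particle, so \eqref{eq:zeta} gives $\zeta_T(y)<0$; a tracer in state~$A$ cannot share a site with a nontracer $B$-particle or with the other tracer in state~$B$, so \eqref{eq:zetarev} gives $\widehat\zeta_T(y)\geq 0$. Since $\zeta_T(y)=\xi_T(y)=\widehat\zeta_T(y)$, this is a contradiction. If you want to salvage your approach, prove the identity of \thref{prop:zeta}~\ref{i:zeta} first (that is where the case analysis over interaction types actually lives) and then replace your induction by this first-crossing argument; note that the continuity of $L^Y$ and $\widehat L^Y$ is what localizes the contradiction to a single site at a single time, a step for which your event-by-event scheme offers no substitute.
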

\begin{proof}
  Let $T=\inf\{t\colon L_t^Y<\widehat{L}_t^Y\}$ and suppose by way of contradiction
  it is finite. Since $L_t^Y$ and $\widehat{L}_t^Y$ are continuous in $t$, we have $L_T^Y=\widehat{L}_T^Y$.
  Let $y=Y_{L_T^Y}=Y_{\widehat{L}_T^Y}$, the location of the $Y$-tracer at time~$T$
  in both the tracer and flipped tracer systems. By the definition of $T$, we have $L_T^Y\geq\widehat{L}_T^Y$ and that
  the $Y$-tracer must be in state~$B$ in the tracer system and state~$A$ in the flipped tracer system.
  Now, we argue this is a contradiction. By the dynamics of the tracer system, 
  the $Y$-tracer in state~$B$ may not
  sit on the same site as a nontracer $A$-particle.
  From the definition of $\zeta$ given in \eqref{eq:zeta},
  we have $\zeta_T(y)<0$. On the other hand, in the flipped tracer system, the $Y$-tracer in state~$A$
  may not sit on the same site as a nontracer $B$-particle or the $X$-tracer in state~$B$.
  From the definition of $\widehat{\zeta}$ given in \eqref{eq:zetarev}, we have $\widehat{\zeta}_T(y)\geq 0$.
  But by \thref{prop:zeta}~\ref{i:zeta} and its analogue for the flipped
  tracer system, we have $\zeta_T(y)=\xi_T(y)=\widehat{\zeta}_T(y)$,
  a contradiction.
\end{proof}

Recalling the definition of the occupation time $V_T$ from \eqref{eq:V}, it follows from \thref{prop:zeta}~\ref{i:zeta} that
\begin{align*}
  V_T &= \sum_{x \in H}\int_0^T \zeta_t(\rho)\1\bigl\{\zeta_t(\rho)>0\bigr\}dt,
\end{align*}
using the equality of $\xi_t$ and $\zeta_t$. Define
\begin{align}
  V_T^X &= \sum_{x \in H}\int_0^T \zeta^X_t(\rho)\1\bigl\{\zeta^X_t(\rho)>0\bigr\}dt,\\
  V_T^Y &= \sum_{x \in H}\int_0^T \widehat{\zeta}^Y_t(\rho)\1\bigl\{\widehat{\zeta}^Y_t(\rho)>0\bigr\}dt,\\
  V_T^{X,Y} &= \sum_{x \in H}\int_0^T \zeta^{X,Y}_t(\rho)\1\bigl\{\zeta^{X,Y}_t(\rho)>0\bigr\}dt. \label{eq:V^}
\end{align}
The random variables $V_T^X$ and $V_T^Y$ give the occupation time at sites~$H$ when
the $X$-tracer and $Y$-tracer, respectively, are added. Thus $V_T^{X}$ is defined in terms
of the tracer system while $V_T^{Y}$ is defined in terms of the flipped tracer system.
The occupation time $V_T^{X,Y}$ could be defined in terms of either system---if $\widehat\zeta^{Y,X}_t(\rho)$
replaced $\zeta^{X,Y}_t(\rho)$ in its definition, it would not change its distribution---but
its current definition is consistent with the proof strategy given in the paragraph
preceding \thref{lem:longer.life}.

\begin{lemma}\thlabel{lem:VT}
 For all $T>0$,
 \begin{align}
   V_T^X-V_T &= \int_0^{L_T^X} \ind{X_t\in H}\,dt,\label{eq:VTX.dif}\\
   V_T^Y - V_T &= \int_0^{\widehat{L}_T^Y} \ind{Y_t\in H}\,dt,\label{eq:VTY.dif}\\
   V_T^{X,Y} - V_T &= \int_0^{L_T^X} \ind{X_t \in H }\,dt + \int_0^{L_T^Y} \ind{Y_t\in H}\,dt.\label{eq:VTXY.dif}
 \end{align}
\end{lemma}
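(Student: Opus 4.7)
The plan is to rewrite each of the three differences pointwise using \thref{lem:difference}, then evaluate the integrand at every $(t,x)$ using the interaction dynamics of the (flipped) tracer system. Writing $f(a)=a\1\{a>0\}$, we have $V_T^X-V_T=\sum_{x\in H}\int_0^T \bigl[f(\zeta^X_t(x))-f(\zeta_t(x))\bigr]\,dt$, and by \thref{lem:difference} the integrand vanishes off the site where the $X$-tracer sits. I split on the $X$-tracer's state. When $A_t^X$ holds at $x$, the interaction rules force $\beta_t(x)=0$, and rule~\ref{i:priority} in the tracer system rules out the configuration ``$X^A$ and $Y^B$ at $x$,'' so \eqref{eq:zeta} and \eqref{eq:zeta.X} give $\zeta_t(x)=\alpha_t(x)\geq 0$ and $\zeta^X_t(x)=\alpha_t(x)+1$, whence $f(\zeta^X_t(x))-f(\zeta_t(x))=1$. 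When $B_t^X$ holds at $x$, no nontracer $A$-particle can share the site, so $\alpha_t(x)=0$ and then both $\zeta_t(x)\leq -1$ and $\zeta^X_t(x)\leq 0$, killing $f$ on both terms.

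Summing over $x\in H$ collapses the pointwise formula to
\[
  V_T^X-V_T=\int_0^T \1\{A_t^X\}\,\1\{X_{L_t^X}\in H\}\,dt,
\]
and the substitution $s=L_t^X$, using $dL_t^X/dt=\1\{A_t^X\}$, converts this to $\int_0^{L_T^X}\1\{X_s\in H\}\,ds$. The second identity is the same argument applied verbatim to the flipped tracer system, with $(\widehat\zeta,\widehat\zeta^Y,\widehat A_t^Y,\widehat L_t^Y)$ in place of $(\zeta,\zeta^X,A_t^X,L_t^X)$. For the third identity I track both tracers simultaneously in the tracer system: where neither tracer sits the counts agree; where exactly one tracer sits the single-tracer analysis above applies unchanged; and where both tracers sit, rule~\ref{i:priority} eliminates $X^A,Y^B$, leaving only $X^A Y^A$, $X^B Y^A$, and $X^B Y^B$. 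A direct check from \eqref{eq:zeta} and \eqref{eq:zeta.X.Y} shows the pointwise contribution in each of these subcases equals $\1\{A_t^X\}+\1\{A_t^Y\}$ at the site. Summing over $x\in H$, integrating, and applying the substitutions $s=L_t^X$ and $s=L_t^Y$ produces \eqref{eq:VTXY.dif}.

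The main obstacle is really just the bookkeeping in this case analysis: one has to verify that a tracer in state $A$ never coexists at a site with a particle (a nontracer $B$-particle, or the other tracer in state $B$) that would make $\zeta_t(x)$ negative while $\zeta^X_t(x)$ stays positive, because that would corrupt the clean identity $f(\zeta^X_t)-f(\zeta_t)=1$. The interaction rules~\ref{i:nontracers}--\ref{i:priority} are calibrated precisely to prevent this, so once the list of locally reachable configurations is correctly enumerated the three identities fall out of the definitions \eqref{eq:zeta}--\eqref{eq:zeta.X.Y} and their flipped analogues with no further computation.
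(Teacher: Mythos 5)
Your proposal is correct and follows essentially the same route as the paper: both establish the pointwise identities for $f(\zeta^{\cdot}_t(x))-f(\zeta_t(x))$ via \thref{lem:difference} together with a case analysis of the locally admissible tracer configurations, and then convert $\int_0^T\1\{A_t^X\}\,dt$-type integrals by the time change $s=L_t^X$. The only difference is organizational — you split on the tracers' states directly, whereas the paper splits on the sign of $\zeta_t(\rho)$ and then identifies the exceptional event $\{\zeta^{X,Y}_t(\rho)=1,\ \zeta_t(\rho)=-1\}$ — but the content is the same.
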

\begin{proof}
  We claim that for $x \in H$
  \begin{align}
    \zeta^X_t(\rho)\1\bigl\{\zeta^X_t(\rho)>0\bigr\} - \zeta_t(\rho)\1\bigl\{\zeta_t(\rho)>0\bigr\}
      &=\ind{X_{L^X_t}=\rho}\ind{\zeta_t(\rho)\geq 0}\label{eq:zx}\\
      &=\ind{X_{L^X_t}=\rho}\1_{A_t^X}.\label{eq:zx2}
  \end{align}
  Indeed, by \thref{lem:difference} we have $\zeta^X_t(\rho)-\zeta_t(\rho)=\ind{X_{L^X_t}=\rho}$.
  If $\zeta_t(\rho)<0$, then $\zeta_t^X(\rho)\leq 0$, and both terms on the left-hand side
  of \eqref{eq:zx} are zero.
  If $\zeta_t(\rho)\geq 0$, then $\zeta_t^X(\rho)\geq 0$, and \eqref{eq:zx} is equal to $\ind{X_{L^X_t}=\rho}$.
  To show \eqref{eq:zx2}, assume the $X$-tracer is at $\rho$.
  If it is in state~$B$, then there can be no nontracer $A$-particles at $\rho$, and from \eqref{eq:zeta}
  we have $\zeta_t(z)<0$. If it is in state~$A$, then there can be no nontracer $B$-particles
  at $\rho$, and by \ref{i:priority} the $Y$-tracer cannot
  be at $\rho$ in state~$B$; hence $\zeta_t(\rho)\geq 0$ from \eqref{eq:zeta}. Thus $A_t^X$ occurs
  if and only if $\zeta_t(\rho)\geq 0$ under the assumption that $X_{L_t^X}=\rho$. 
  
  Applying \eqref{eq:zx} and \eqref{eq:zx2},
  \begin{align*}
    V_T^X - V_T =\sum_{x \in H} \int_0^T \ind{X_{L^X_t}=\rho}\1_{A_t^X}\,dt.
  \end{align*}
  Since $L_t^X$ only increases while the $X$-tracer is in state~$A$, we arrive at \eqref{eq:VTX.dif}.
  Equation~\eqref{eq:VTY.dif} is proven identically.
  
  The proof of \eqref{eq:VTXY.dif} is similar. We argue that
  \begin{align}
    \begin{split}
    &\zeta^{X,Y}_t(\rho)\1\bigl\{\zeta^{X,Y}_t(\rho)>0\bigr\} - \zeta_t(\rho)\1\bigl\{\zeta_t(\rho)>0\bigr\}\\
      &\qquad\qquad\qquad\qquad= \1\bigl\{X_{L_t^X}=\rho\bigr\}\1_{A_t^X}
        +\1\bigl\{Y_{L_t^Y}=\rho\bigr\}\1_{A_t^Y}.
    \end{split}\label{eq:zxy}
  \end{align}
  There are three cases to consider. First, if $\zeta_t(\rho)\geq 0$, then $\zeta_t^{X,Y}(\rho)\geq 0$
  by \thref{lem:difference}. Thus the left-hand side of \eqref{eq:zxy} is equal to
  $\ind{X_{L_t^X}=z} + \ind{Y_{L_t^Y}=z}$ in this case by \thref{lem:difference}.
  Next, if $\zeta_t(\rho)\leq -2$, then both terms on the left-hand side of \eqref{eq:zxy}
  are zero. 
  Last, if $\zeta_t(\rho)=-1$ then $\zeta_t^{X,Y}$ is one of $-1$, $0$, and $1$, and
  hence the left-hand side of \eqref{eq:zxy}
  to $\ind{\zeta^{X,Y}_t(\rho)=1}$. All together, we have shown that
  \begin{align*}
    &\zeta^{X,Y}_t(\rho)\1\bigl\{\zeta^{X,Y}_t(\rho)>0\bigr\} - \zeta_t(\rho)\1\bigl\{\zeta_t(\rho)>0\bigr\}\\
      &\qquad=\bigl(\ind{X_{L_t^X}=\rho} + \ind{Y_{L_t^Y}=\rho}  \bigr)\ind{\zeta_t(\rho)\geq 0} +
      \ind{\zeta^{X,Y}_t(\rho)=1,\, \zeta_t(\rho)=-1}
  \end{align*}
  The event that $\zeta^{X,Y}_t(\rho)=1$ and $\zeta_t(\rho)=-1$ can only occur when 
  both tracers are at $\rho$ by \thref{lem:difference}. If both tracers are in state~$A$ then
  $\zeta_t(z)\geq 0$, while if both tracers are in state~$B$ then $\zeta_t^{X,Y}(\rho)\leq 0$.
  Hence this event occurs when both tracers are at $\rho$ with the $X$-tracer in state~$B$,
  the $Y$-tracer in state~$A$, and (necessarily) no nontracer particles are at $\rho$.
  That is,
  \begin{align*}
    \ind{\zeta^{X,Y}_t(\rho)=1,\, \zeta_t(\rho)=-1} = \ind{X_{L_t^X}=\rho,\,Y_{L_t^Y}=\rho}\1_{B_t^X}\1_{A_t^Y}.
  \end{align*}
  Meanwhile, when proving \eqref{eq:zx2}, we showed that
  $A_t^X$ occurs if and only if $\zeta_t(\rho)\geq 0$ under the assumption that $X_{L_t^X}=\rho$.
  All together, the left-hand side of \eqref{eq:zxy} is equal to
  \begin{align*}
    &\ind{X_{L_t^X}=\rho}\1_{A_t^X} + \ind{Y_{L_t^Y}=\rho}\ind{\zeta_t(\rho)\geq 0}
      + \ind{X_{L_t^X}=\rho,\,Y_{L_t^Y}=\rho}\1_{B_t^X}\1_{A_t^Y}\\
      &\qquad =\ind{X_{L_t^X}=\rho}\1_{A_t^X} + \ind{Y_{L_t^Y}=\rho}\bigl(\ind{\zeta_t(\rho)\geq 0} 
      + \ind{X_{L_t^X}=\rho}\1_{B_t^X}\1_{A_t^Y}\bigr)\\
      &\qquad = \ind{X_{L_t^X}=\rho}\1_{A_t^X} + \ind{Y_{L_t^Y}=\rho}\1_{A_t^Y}.
  \end{align*}
  With \eqref{eq:zxy} proven, the rest of the proof of \eqref{eq:VTXY.dif} goes the same as for
  \eqref{eq:VTX.dif} and \eqref{eq:VTY.dif}.
  \end{proof}

  \begin{remark} \thlabel{rem:B}
If $B$-particles moved, then similar equations to those in \eqref{eq:zeta}--\eqref{eq:zetarev.Y.X} could be derived. However the time change would be more involved since parts of $X$ and $Y$ would be traversed by the tracer in state $B$. This time change would also complicate the formulas in \thref{lem:VT}, since the integrals would be over disconnected subintervals within $[0,T]$ traversed by the tracers while in state $A$, rather than a single connected subinterval such as $[0,L_T^X]$ when $B$-particles do not move. 
Nonetheless, the approach used in \cite[Section 4]{CRS} to track the difference between systems with different starting configurations when the jump rates of $A$- and $B$-particles are possibly distinct ought to apply to our similarly constructed tracers. Hence \thref{prop:zeta} and by extension \thref{lem:difference} should both continue to hold.

Issues arise with \thref{lem:longer.life} and \thref{lem:VT}. The lives of, say, the $Y$-tracer in the tracer and flipped tracer systems are given by integrals over possibly different collections of subintervals. Coupling the size of these subintervals and the tracer locations at these times does not seem possible. Consequently, we do not believe that the conclusions of \thref{lem:longer.life} nor \thref{lem:VT} generalize to the setting in which $B$-particle are also mobile. 

\end{remark}

\subsection{Proof of \thref{prop:i_ii}}

\begin{proof}
  Suppose that $(\xi_0, S,h)$ is regular with $\sum_{x \in G} |\xi_0(x)|< \infty$. Fix $x$ and $k$ and define the tracer and flipped tracer systems with initial configuration
  $\xi_{0,k}$. Let $V_T$ be the occupation time of $H \subseteq G$ for $(\xi_{0,k},S,h)$. 
  We claim that the coupled random variables $(V_T, V_T^X, V_T^Y V_T^{X,Y})$ defined at \eqref{eq:V^}
  satisfy the conditions of \thref{def:icx.stat}.
  By \thref{prop:zeta}, conditions~\ref{i:Phi}--\ref{i:Phi.X.Y} hold.
  By \eqref{eq:VTX.dif} and \eqref{eq:VTY.dif} in \thref{lem:VT}, condition~\ref{i:Phi.1st.order}
  holds. By \thref{lem:VT} again,
  \begin{align*}
    V_T^{X,Y} - V_T^X - V_T^Y + V_T &= (V_T^{X,Y}-V_T) - (V_T^X-V_T) - (V_T^Y-V_T)\\
     &= \int_0^{L_T^Y}\ind{Y_t\in H}\,dt - \int_0^{\widehat L_T^Y} \ind{Y_t\in H}\,dt,
  \end{align*}
  and this is nonnegative since $L_T^Y\geq \widehat L_T^Y$ by \thref{lem:longer.life},
  which shows that condition~\ref{i:Phi.2nd.order} holds.
\end{proof}

\section{Monotonicity of icx statistics under the icx order} 

This section connects icx statistics to the icx order. 
The idea is to increase the initial configuration in the icx order at one site at a time,
and to show that at each step the icx statistic increases as well. It is more a technical argument
about stochastic orders than it is anything about diffusion-limited annihilating systems, and it uses
similar arguments from \cite[Lemma~17]{JJ} as templates.

\begin{proposition} \thlabel{prop:icx} 
Suppose that \eqref{eq:comparable} holds. Further assume that $\sum_{z\in G} |\xi_0(z)| < \infty$ and $\sum_{z \in G}| \xi_0'(z)| < \infty$ almost surely.  Fix $x \in G$ and assume that $\xi_0(z) = \xi_0'(z)$ for all $z \neq x$.  Suppose that $f$ is a nonnegative icx statistic of $(\xi_0, S,h)$ as in \thref{def:icx.stat} and let $f' = f(\xi_0', S, h)$.
    \begin{center}If $\xi_0(x) \pmicx \xi_0'(x)$, then  $f \preceq_{icx} f'$.
    \end{center}
\end{proposition}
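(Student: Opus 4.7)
The plan is to reduce the proposition to a statement about discrete convex functions by showing that for every increasing convex $\phi\colon\mathbb R\to\mathbb R$, the function
\[
g_\phi(k) \;:=\; \E\bigl[\phi\bigl(f(\xi_{0,k},S,h)\bigr)\bigr],\qquad k\in\mathbb Z,
\]
is nondecreasing and convex in $k$. Once this is in hand, I would condition on the value of $\xi_0(x)$ (respectively $\xi_0'(x)$) to rewrite $\E[\phi(f)]=\E[g_\phi(\xi_0(x))]$ and $\E[\phi(f')]=\E[g_\phi(\xi_0'(x))]$; the hypothesis $\xi_0(x)\icx\xi_0'(x)$ applied to $g_\phi$ (extended to $\mathbb R$ by linear interpolation to get an increasing convex function on $\mathbb R$) then yields $\E[\phi(f)]\leq\E[\phi(f')]$, and since $\phi$ was arbitrary this gives $f\icx f'$.

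To verify the two properties of $g_\phi$, I would use the coupling $(\Phi,\Phi^X,\Phi^Y,\Phi^{X,Y})$ guaranteed by \thref{def:icx.stat}. Monotonicity is immediate: condition \ref{i:Phi.1st.order} together with $\phi$ nondecreasing gives $\phi(\Phi^X)\geq\phi(\Phi)$ a.s., and taking expectations via \ref{i:Phi} and \ref{i:Phi.X} yields $g_\phi(k+1)\geq g_\phi(k)$. The discrete convexity $g_\phi(k+2)-2g_\phi(k+1)+g_\phi(k)\geq 0$ will follow, after taking expectations and using $\Phi^X\eqd\Phi^Y$, from the pointwise four-point inequality
\[
\phi(\Phi^{X,Y})-\phi(\Phi^X)-\phi(\Phi^Y)+\phi(\Phi)\geq 0 \quad\text{a.s.}
\]
Writing $a=\Phi$, $b=\Phi^X$, $c=\Phi^Y$, $d=\Phi^{X,Y}$, conditions \ref{i:Phi.1st.order} and \ref{i:Phi.2nd.order} give $a\leq b$, $a\leq c$, and $d\geq b+c-a$, whence $d\geq b\vee c$. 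Assuming WLOG $b\leq c$ (the inequality is symmetric in $b$ and $c$), we have $d-c\geq b-a\geq 0$, and the monotonicity of $\phi$ together with the standard consequence of convexity that $y\mapsto\phi(y+h)-\phi(y)$ is nondecreasing for fixed $h\geq 0$ yields
\[
\phi(d)-\phi(c)\geq \phi(c+(b-a))-\phi(c)\geq \phi(a+(b-a))-\phi(a)=\phi(b)-\phi(a),
\]
which is the desired inequality rearranged.

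For the conditioning step, regularity ensures the coordinates of $\xi_0$ are independent, and by assumption $\xi_0(z)=\xi_0'(z)$ for $z\neq x$ with $S=S'$ and $h=h'$. Therefore the regular conditional law of $(\xi_0,S,h)$ given $\xi_0(x)=k$ coincides with that of $(\xi_{0,k},S,h)$, and similarly for $(\xi_0',S',h')$ given $\xi_0'(x)=k$, with the identical function $g_\phi$ appearing on both sides. The only step requiring real care is the pointwise four-point inequality; the rest is bookkeeping. Nonnegativity of $f$ plays no role beyond ensuring $g_\phi(k)$ is unambiguously defined (either finite or $+\infty$) for any increasing convex $\phi$, so that no $\infty-\infty$ issues arise when the expectations are taken or compared.
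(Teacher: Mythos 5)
Your proposal is correct and follows essentially the same route as the paper: define $k\mapsto\E\varphi(f(\xi_{0,k},S,h))$, verify discrete monotonicity and convexity via the coupling from \thref{def:icx.stat} (your four-point inequality is the paper's two-step estimate $\varphi(\Phi^{X,Y})-\varphi(\Phi^X)\geq\varphi(\Phi^X+\Phi^Y-\Phi)-\varphi(\Phi^X)\geq\varphi(\Phi^Y)-\varphi(\Phi)$, reorganized with a WLOG), and conclude by linear interpolation and the hypothesis $\xi_0(x)\icx\xi_0'(x)$. The only cosmetic difference is that you spell out the conditioning/independence step that the paper leaves implicit.
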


For a function~$h$ on the integers, we define the difference operators
\begin{align}\label{eq:D.def}
  D h(k) &= h(k+1)-h(k)\\
  D^2 h(k) &= D[h(k+1)-h(k)] = h(k+2) - 2h(k+1) + h(k).
\end{align}
The discrete analogue of convexity is that $f$ satisfies $Df(k)\geq 0$ and $D^2f(k)\geq 0$ for 
all integers $k$. We would expect that if $X\icx Y$ for integer-valued random variables
$X$ and $Y$, then $\E h(X)\leq \E h(Y)$ when $h$ is convex. Indeed, this is correct:

\begin{lemma} \thlabel{lem:test}
 Let $X$ and $Y$ be integer-valued random variables and $h \colon \mathbb Z \to \mathbb R$ satisfy $D h(k) \geq 0$ and $D^2 h(k) \geq 0$ for all $k \in \mathbb Z$. If $X\pmicx Y$, then $\E h(X) \leq \E h(Y)$.

\end{lemma}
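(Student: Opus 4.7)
The plan is to reduce this discrete statement to the continuous definition of the icx order by extending $h$ from $\mathbb{Z}$ to an increasing convex function on $\mathbb{R}$ via linear interpolation.

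Concretely, I would define $\tilde h\colon \mathbb{R}\to\mathbb{R}$ by
\begin{equation}
\tilde h(x) = h(k) + (x-k)\,Dh(k) \qquad \text{for } k \leq x \leq k+1,\ k \in \mathbb{Z}.
\end{equation}
Note that $\tilde h$ is continuous at each integer (both formulas agree) and satisfies $\tilde h(k) = h(k)$ for all $k\in \mathbb{Z}$. Since $X$ and $Y$ are integer-valued, this gives $\E \tilde h(X) = \E h(X)$ and $\E \tilde h(Y) = \E h(Y)$, so the lemma reduces to showing $\E \tilde h(X) \leq \E \tilde h(Y)$.

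Next I would verify that $\tilde h$ is increasing and convex on $\mathbb{R}$. The function $\tilde h$ is piecewise linear with slope $Dh(k)$ on the interval $[k,k+1]$. Monotonicity is immediate from $Dh(k)\geq 0$. For convexity, a continuous piecewise linear function is convex if and only if its slopes are nondecreasing from left to right, and here the slope on $[k+1,k+2]$ exceeds the slope on $[k,k+1]$ by exactly $D^2 h(k) \geq 0$. Hence $\tilde h$ is increasing and convex on all of $\mathbb{R}$.

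Finally, applying the definition of the icx order with the test function $\tilde h$, the hypothesis $X \pmicx Y$ gives $\E \tilde h(X) \leq \E \tilde h(Y)$ (assuming the expectations exist, which is implicit in the statement), and transferring this back through $\tilde h(k)=h(k)$ yields $\E h(X)\leq \E h(Y)$. There is no real obstacle here; the only thing to be slightly careful about is handling the extension on the negative half-line, but the formula above applies uniformly for all $k\in\mathbb{Z}$, so convexity and monotonicity hold globally.
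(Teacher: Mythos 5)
Your proposal is correct and follows exactly the paper's argument: extend $h$ to $\mathbb{R}$ by linear interpolation, note that $Dh\geq 0$ and $D^2h\geq 0$ make the interpolant increasing and convex, and apply the definition of the icx order to this test function. You simply spell out the verification of convexity (nondecreasing slopes) in slightly more detail than the paper does.
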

\begin{proof}
  Let $\bar h \colon (-\infty, \infty) \to \mathbb R$ be the linear interpolation of $h$ between adjacent integer points. Since $Dh(k) \geq 0$ and $D^2 h(k) \geq 0$, the function $\bar h$ is increasing and convex on $(-\infty, \infty)$. Thus, $X \icx Y$ implies that $\E h(X) =\E \bar h(X) \leq \E \bar h(Y) = \E h(Y)$.
\end{proof}


\begin{proof}[Proof of \thref{prop:icx}]
  Define $\xi_{0,k}$ to be the same as $\xi_0$ except that $\xi_{0,k} (x) = k$. Let $W(k) = f(\xi_{0,k},S,h)$ so that
\begin{align}
    W(\xi_0(x)) \overset{d}= f \quad \text{and} \quad W(\xi_0'(x)) \overset{d}= f'.
\end{align}
 
Let $\varphi \colon [0, \infty) \rightarrow [0, \infty)$ be an increasing convex function and let $h(k) = \E \varphi(W(k))$ for all $k \in \mathbb Z$. Then
\begin{align}\label{eq:testfunlink}
  \E h(\xi_0(x)) = \E\varphi(f) \quad \text{and} \quad \E h(\xi_0'(x)) = \E \varphi(f').
\end{align}
We will show that $Dh(k)\geq 0$ and $D^2h(k)\geq 0$.
Then it will follow from \thref{lem:test} that 
$\E h(\xi_0(x))\leq\E h(\xi_0'(x))$, since $\xi_0(x)\pmicx \xi_0'(x)$ by hypothesis. 
By \eqref{eq:testfunlink}, this shows that
$\E\varphi(f)\leq\E\varphi(f')$, which proves $f\icx f'$.

Thus it only remains to show $Dh(k)\geq 0$ and $D^2h(k)\geq 0$.
Let $(\Phi,\Phi^X,\Phi^Y,\Phi^{X,Y})$ be a coupling as described in the definition of an icx statistic.
Then
  \begin{align*}
    D h(k) = \E \varphi(f(\xi_{0,k+1},S,h)) - \E \varphi(f(\xi_{0,k},S,h)) = \E [ \varphi(\Phi^X)-\varphi(\Phi)].
  \end{align*}
  Since $\Phi^X\geq\Phi$ a.s.\ and $\varphi$ is increasing, we have $D h(k)\geq 0$. 

  For the second order condition, we expand $D^2h(k)$ as
  \begin{align}
    D^2 h(k) &= \E\bigl[ \varphi(f(\xi_{0,k+2},S,h))\bigr] - 2\E\bigl[\varphi(f(\xi_{0,k+1},S,h))\bigr] + 
      \E\bigl[\varphi(f(\xi_{0,k},S,h))\bigr] \\
      &= \E\bigl[ \varphi(\Phi^{X,Y}) - \varphi(\Phi^X) - \varphi(\Phi^Y) + \varphi(\Phi) \bigr].
                           \label{eq:diff}
  \end{align}    
  We claim that
  \begin{align*}
    \varphi(\Phi^{X,Y}) - \varphi(\Phi^X)&\geq \varphi(\Phi^X + \Phi^Y-\Phi) - \varphi(\Phi^X)\\
      &\geq \varphi(\Phi^Y)-\varphi(\Phi).
  \end{align*}
  The first inequality holds because $\varphi$ is increasing and
   $\Phi^{X,Y}\geq \Phi^X+\Phi^Y-\Phi$ by item~\ref{i:Phi.2nd.order} of the definition of icx statistic.
  For the  second inequality, observe that by convexity
  $\varphi(a+u)-\varphi(a)\geq \varphi(b+u)-\varphi(b)$ for any $a\geq b$
  and $u\geq 0$, and then take $a=\Phi^X$, $b=\Phi$, and $u=\Phi^Y-\Phi$.
  Thus \eqref{eq:diff} is nonnegative, completing the proof.
\end{proof}

\section{Further questions} \label{sec:further}

A natural question is whether or not our results extend to DLAS with mobile $B$-particles.
Our intuition is that they do, since it remains the case that two $A$-particles can assist 
each other to clear out $B$-particles that would otherwise destroy an $A$-particle added individually.
But as we discuss in \thref{rem:B}, if we use our current coupling with moving $B$-particles,
the tracer and flipped tracer systems have a more complex relationship, and we cannot prove
that $V_T$ is an icx statistic.

Even for the stationary $B$-particle case, it would be interesting to find functionals besides occupation time that respect the icx order. For example, we speculate that the lifespan of a distinguished $A$- or $B$-particle might be monotonic (increasing or decreasing, respectively) in the icx order as more $A$-particles are added. Similarly, the time of the first visit to a distinguished vertex by an $A$-particle as well as the total number of $A$-particles still alive at time $t$ might respect the icx order. 

Finally, we are interested in whether there is a general framework for understanding which statistics
in different interacting particle systems respect which stochastic orders.
This paper demonstrates that DLAS, with interaction rule $A+B\to\varnothing$, is compatible
with the icx order. In \cite{JJ}, it is shown that the frog model, with interaction rule $A+B\to 2A$,
is compatible with the icv order. Is there a systematic explanation across different particle systems?



\newpage 

\appendix 

\section{Figures from the proof of \thref{prop:zeta}}
\quad 

\begin{figure}[h]
    \centering
\begin{tikzpicture}[particle/.style={anchor=base west,shift={(-.175,.1)},inner sep=0}]
      \begin{scope}[shift={(3,0)}]
        \path (-3.6,0) node[text width=6.25cm,anchor=base west] 
           {Case 1: The $A$-particle jumps onto a site containing neither $B$-particles
            nor tracers in state~$B$. No interaction occurs.};
        \begin{scope}[shift={(0,-.2)}]
        \draw[thick,shift={(-1.75,0)}] (-.1,-2)--+(-.6,0) (-.4,-2) node[particle] (v1) {$A$} (.1,-2)--+(.6,0) (.4,-2) node[particle] (v2) {$A^X$}             ;
        \draw[thick,->] (v1) to [out=45,in=135] (v2);
        \draw[thick,shift={(1.75,0)}] (-.1,-2)--+(-.6,0)  (.1,-2)--+(.6,0) (.4,-2) node[particle] (av2) {$A^X$}         node[particle,shift={(0,.35)}] (av1) {$A$}    ;
        \draw[decoration=snake,decorate,->] (-.75,-1.9)--(.75,-1.9);
        
        \begin{scope}[shift={(0.0,-1)}]
          \draw[thick,shift={(-1.75,0)}] (-.1,-2)--++(-.6,0) node[anchor=base east] {$\zeta$:}(-.4,-2) node[particle] (v1) {$A$} (.1,-2)--+(.6,0) (.4,-2) node[particle] (v2) {$\phantom{A}$}             ;
        \draw[thick,->] (v1) to [out=45,in=135] (v2);
        \draw[thick,shift={(1.75,0)}] (-.1,-2)--+(-.6,0)  (.1,-2)--+(.6,0) (.4,-2) node[particle] (av2) {$A$};
        \draw[decoration=snake,decorate,->] (-.75,-1.9)--(.75,-1.9);

        \begin{scope}[shift={(0.0,-1)}]
          \draw[thick,shift={(-1.75,0)}] (-.1,-2)--++(-.6,0) node[anchor=base east] {$\zeta^X$:}(-.4,-2) node[particle] (v1) {$A$} (.1,-2)--+(.6,0) (.4,-2) node[particle] (v2) {$A$}             ;
        \draw[thick,->] (v1) to [out=45,in=135] (v2);
        \draw[thick,shift={(1.75,0)}] (-.1,-2)--+(-.6,0)  (.1,-2)--+(.6,0) (.4,-2) node[particle] (av2) {$A$}         node[particle,shift={(0,.35)}] (av1) {$A$}    ;
        \draw[decoration=snake,decorate,->] (-.75,-1.9)--(.75,-1.9);

        \begin{scope}[shift={(0,-1)}]
          \draw[thick,shift={(-1.75,0)}] (-.1,-2)--++(-.6,0) node[anchor=base east] {$\zeta^{X,Y}$:}(-.4,-2) node[particle] (v1) {$A$} (.1,-2)--+(.6,0) (.4,-2) node[particle] (v2) {$A$}             ;
        \draw[thick,->] (v1) to [out=45,in=135] (v2);
        \draw[thick,shift={(1.75,0)}] (-.1,-2)--+(-.6,0)  (.1,-2)--+(.6,0) (.4,-2) node[particle] (av2) {$A$}         node[particle,shift={(0,.35)}] (av1) {$A$}    ;
        \draw[decoration=snake,decorate,->] (-.75,-1.9)--(.75,-1.9);
        \end{scope}
        \end{scope}
        \end{scope}
        \end{scope}

      \end{scope}
      \begin{scope}[shift={(9.5,0)}]
        \path (-3.6,0) node[text width=6.25cm,anchor=base west] {Case 2: The $A$-particle jumps onto a site containing nontracer $B$-particles, mutually annihilating with one of them.};
              
        \begin{scope}[shift={(0,-.2)}]
          \draw[thick,shift={(-1.75,0)}] (-.1,-2)--+(-.6,0) (-.4,-2) node[particle] (v1) {$A$} (.1,-2)--+(.6,0) (.4,-2) node[particle] (v2) {$B$} node[particle,shift={(0,.35)}] (av1) {$B^X$}            ;
        \draw[thick,->] (v1) to [out=45,in=135] (v2);
        \draw[thick,shift={(1.75,0)}] (-.1,-2)--+(-.6,0)  (.1,-2)--+(.6,0) (.4,-2) node[particle] (av2) {$B^X$}             ;
        \draw[decoration=snake,decorate,->] (-.75,-1.9)--(.75,-1.9);
        
        \begin{scope}[shift={(0.0,-1)}]
          \draw[thick,shift={(-1.75,0)}] (-.1,-2)--++(-.6,0) node[anchor=base east] {$\zeta$:}(-.4,-2) node[particle] (v1) {$A$} (.1,-2)--+(.6,0) (.4,-2) node[particle] (v2) {$B$}   node[particle,shift={(0,.35)}] (av1) {$B$}          ;
        \draw[thick,->] (v1) to [out=45,in=135] (v2);
        \draw[thick,shift={(1.75,0)}] (-.1,-2)--+(-.6,0)  (.1,-2)--+(.6,0) (.4,-2) node[particle] (av2) {$B$};
        \draw[decoration=snake,decorate,->] (-.75,-1.9)--(.75,-1.9);

        \begin{scope}[shift={(0.0,-1)}]
          \draw[thick,shift={(-1.75,0)}] (-.1,-2)--++(-.6,0) node[anchor=base east] {$\zeta^X$:}(-.4,-2) node[particle] (v1) {$A$} (.1,-2)--+(.6,0) (.4,-2) node[particle] (v2) {$B$}             ;
        \draw[thick,->] (v1) to [out=45,in=135] (v2);
        \draw[thick,shift={(1.75,0)}] (-.1,-2)--+(-.6,0)  (.1,-2)--+(.6,0) (.4,-2) node[particle] (av2) {$\phantom{A}$}             ;
        \draw[decoration=snake,decorate,->] (-.75,-1.9)--(.75,-1.9);

        \begin{scope}[shift={(0,-1)}]
          \draw[thick,shift={(-1.75,0)}] (-.1,-2)--++(-.6,0) node[anchor=base east] {$\zeta^{X,Y}$:}(-.4,-2) node[particle] (v1) {$A$} (.1,-2)--+(.6,0) (.4,-2) node[particle] (v2) {$B$}             ;
        \draw[thick,->] (v1) to [out=45,in=135] (v2);
        \draw[thick,shift={(1.75,0)}] (-.1,-2)--+(-.6,0)  (.1,-2)--+(.6,0) (.4,-2) node[particle] (av2) {$\phantom{A}$}             ;
        \draw[decoration=snake,decorate,->] (-.75,-1.9)--(.75,-1.9);
        \end{scope}
        \end{scope}
        \end{scope}
        \end{scope}

      \end{scope}

      \begin{scope}[shift={(3,-6.5)}]
        \path (-3.6,0) node[text width=6.25cm,anchor=base west] {Case 3: The $A$-particle jumps onto a site containing no nontracer particles, the $Y$-tracer in state~$B$, and possibly the $X$-tracer in either state. The $A$-particle is annihilated and the $Y$-tracer enters state~$A$.};
              
        \begin{scope}[shift={(0,-1.1)}]
          \draw[thick,shift={(-1.75,0)}] (-.1,-2)--+(-.6,0) (-.4,-2) node[particle] (v1) {$A$} (.1,-2)--+(.6,0) (.4,-2) node[particle] (v2) {$B^Y$} node[particle,shift={(0,.35)}] (av1) {$B^X$}            ;
        \draw[thick,->] (v1) to [out=45,in=135] (v2);
        \draw[thick,shift={(1.75,0)}] (-.1,-2)--+(-.6,0)  (.1,-2)--+(.6,0) (.4,-2) node[particle] (av2) {$A^Y$}         node[particle,shift={(0,.35)}] (av1) {$B^X$}            ;
        \draw[decoration=snake,decorate,->] (-.75,-1.9)--(.75,-1.9);
        
        \begin{scope}[shift={(0.0,-1)}]
          \draw[thick,shift={(-1.75,0)}] (-.1,-2)--++(-.6,0) node[anchor=base east] {$\zeta$:}(-.4,-2) node[particle] (v1) {$A$} (.1,-2)--+(.6,0) (.4,-2) node[particle] (v2) {$B$}  node[particle,shift={(0,.35)}] (av1) {$B$}   ;
        \draw[thick,->] (v1) to [out=45,in=135] (v2);
        \draw[thick,shift={(1.75,0)}] (-.1,-2)--+(-.6,0)  (.1,-2)--+(.6,0) (.4,-2) node[particle] (av2) {$B$};
        \draw[decoration=snake,decorate,->] (-.75,-1.9)--(.75,-1.9);

        \begin{scope}[shift={(0.0,-1)}]
          \draw[thick,shift={(-1.75,0)}] (-.1,-2)--++(-.6,0) node[anchor=base east] {$\zeta^X$:}(-.4,-2) node[particle] (v1) {$A$} (.1,-2)--+(.6,0) (.4,-2) node[particle] (v2) {$B$}             ;
        \draw[thick,->] (v1) to [out=45,in=135] (v2);
        \draw[thick,shift={(1.75,0)}] (-.1,-2)--+(-.6,0)  (.1,-2)--+(.6,0)             ;
        \draw[decoration=snake,decorate,->] (-.75,-1.9)--(.75,-1.9);

        \begin{scope}[shift={(0,-1)}]
          \draw[thick,shift={(-1.75,0)}] (-.1,-2)--++(-.6,0) node[anchor=base east] {$\zeta^{X,Y}$:}(-.4,-2) node[particle] (v1) {$A$} (.1,-2)--+(.6,0) (.4,-2) node[particle] (v2) {$\phantom{A}$}             ;
        \draw[thick,->] (v1) to [out=45,in=135] (v2);
        \draw[thick,shift={(1.75,0)}] (-.1,-2)--+(-.6,0)  (.1,-2)--+(.6,0) (.4,-2) node[particle] (v2) {$A$} ;
        \draw[decoration=snake,decorate,->] (-.75,-1.9)--(.75,-1.9);
        \end{scope}
        \end{scope}
        \end{scope}
        \end{scope}

      \end{scope}      

      \begin{scope}[shift={(9.5,-6.5)}]
        \path (-3.6,0) node[text width=6.25cm,anchor=base west] {Case 4: The $A$-particle jumps onto a site containing only the $X$-tracer in state~$B$. The $A$-particle is annihilated and the $X$-tracer enters state~$A$.};
              
        \begin{scope}[shift={(0,-1.1)}]
          \draw[thick,shift={(-1.75,0)}] (-.1,-2)--+(-.6,0) (-.4,-2) node[particle] (v1) {$A$} (.1,-2)--+(.6,0) (.4,-2) node[particle] (v2) {$B^X$};
        \draw[thick,->] (v1) to [out=45,in=135] (v2);
        \draw[thick,shift={(1.75,0)}] (-.1,-2)--+(-.6,0)  (.1,-2)--+(.6,0) (.4,-2) node[particle] (av2) {$A^X$}             ;
        \draw[decoration=snake,decorate,->] (-.75,-1.9)--(.75,-1.9);
        
        \begin{scope}[shift={(0.0,-1)}]
          \draw[thick,shift={(-1.75,0)}] (-.1,-2)--++(-.6,0) node[anchor=base east] {$\zeta$:}(-.4,-2) node[particle] (v1) {$A$} (.1,-2)--+(.6,0) (.4,-2) node[particle] (v2) {$B$}      ;
        \draw[thick,->] (v1) to [out=45,in=135] (v2);
        \draw[thick,shift={(1.75,0)}] (-.1,-2)--+(-.6,0)  (.1,-2)--+(.6,0) ;
        \draw[decoration=snake,decorate,->] (-.75,-1.9)--(.75,-1.9);

        \begin{scope}[shift={(0.0,-1)}]
          \draw[thick,shift={(-1.75,0)}] (-.1,-2)--++(-.6,0) node[anchor=base east] {$\zeta^X$:}(-.4,-2) node[particle] (v1) {$A$} (.1,-2)--+(.6,0) (.4,-2) node[particle] (v2) {$\phantom{B}$}             ;
        \draw[thick,->] (v1) to [out=45,in=135] (v2);
        \draw[thick,shift={(1.75,0)}] (-.1,-2)--+(-.6,0)  (.1,-2)--+(.6,0) (.4,-2) node[particle] (av2) {$A$}             ;
        \draw[decoration=snake,decorate,->] (-.75,-1.9)--(.75,-1.9);

        \begin{scope}[shift={(0,-1)}]
          \draw[thick,shift={(-1.75,0)}] (-.1,-2)--++(-.6,0) node[anchor=base east] {$\zeta^{X,Y}$:}(-.4,-2) node[particle] (v1) {$A$} (.1,-2)--+(.6,0) (.4,-2) node[particle] (v2) {$\phantom{B}$}             ;
        \draw[thick,->] (v1) to [out=45,in=135] (v2);
        \draw[thick,shift={(1.75,0)}] (-.1,-2)--+(-.6,0)  (.1,-2)--+(.6,0) (.4,-2) node[particle] (av2) {$A$}             ;
        \draw[decoration=snake,decorate,->] (-.75,-1.9)--(.75,-1.9);
        \end{scope}
        \end{scope}
        \end{scope}
        \end{scope}

      \end{scope}

\end{tikzpicture}
    
    \caption{When a nontracer $A$-particle in the tracer system jumps, there are four cases.
    The top line in each case shows how the tracer system evolves when the particle jumps.
    The tracer particles are indicated by $A^X$, $A^Y$, $B^X$, or $B^Y$, with $A$ and $B$ giving
    their states and $X$ and $Y$ specifying the tracer. The lines below show how the particles
    are viewed by $\zeta$, $\zeta^X$, and $\zeta^{X,Y}$. 
    }\label{fig:nontracer.jump}
  \end{figure}
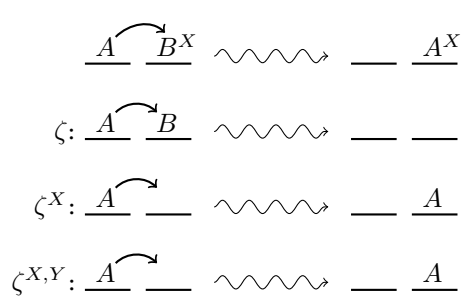
  
  \begin{figure}[h]
    \centering
\begin{tikzpicture}[particle/.style={anchor=base west,shift={(-.175,.1)},inner sep=0}]
      \begin{scope}[shift={(3,0)}]
        \path (-3.6,0) node[text width=6.25cm,anchor=base west] 
           {Case 1: The $X$-tracer in state~$A$ jumps onto a site containing
           no nontracer $B$-particles nor the $Y$-tracer in state~$B$. No interaction occurs.};
        \begin{scope}[shift={(0,-.2)}]
        \draw[thick,shift={(-1.75,0)}] (-.1,-2)--+(-.6,0) (-.4,-2) node[particle] (v1) {$A^X$} (.1,-2)--+(.6,0) (.4,-2) node[particle] (v2) {$A^Y$}             ;
        \draw[thick,->] (v1) to [out=45,in=135] (v2);
        \draw[thick,shift={(1.75,0)}] (-.1,-2)--+(-.6,0)  (.1,-2)--+(.6,0) (.4,-2) node[particle] (av2) {$A^Y$}         node[particle,shift={(0,.35)}] (av1) {$A^X$}    ;
        \draw[decoration=snake,decorate,->] (-.75,-1.9)--(.75,-1.9);
        
        \begin{scope}[shift={(0.0,-1)}]
          \draw[thick,shift={(-1.75,0)}] (-.1,-2)--++(-.6,0) node[anchor=base east] {$\zeta$:}(-.4,-2) node[particle] (v1) {$\phantom{A}$} (.1,-2)--+(.6,0) (.4,-2) node[particle] (v2) {$\phantom{A}$}             ;
        \draw[thick,shift={(1.75,0)}] (-.1,-2)--+(-.6,0)  (.1,-2)--+(.6,0) ;
        \draw[decoration=snake,decorate,->] (-.75,-1.9)--(.75,-1.9);

        \begin{scope}[shift={(0.0,-1)}]
          \draw[thick,shift={(-1.75,0)}] (-.1,-2)--++(-.6,0) node[anchor=base east] {$\zeta^X$:}(-.4,-2) node[particle] (v1) {$A$} (.1,-2)--+(.6,0) (.4,-2) node[particle] (v2) {$\phantom{A}$}             ;
        \draw[thick,->] (v1) to [out=45,in=135] (v2);
        \draw[thick,shift={(1.75,0)}] (-.1,-2)--+(-.6,0)  (.1,-2)--+(.6,0)  (.4,-2) node[particle] (v2) {$A$}       ;
        \draw[decoration=snake,decorate,->] (-.75,-1.9)--(.75,-1.9);

        \begin{scope}[shift={(0,-1)}]
          \draw[thick,shift={(-1.75,0)}] (-.1,-2)--++(-.6,0) node[anchor=base east] {$\zeta^{X,Y}$:}(-.4,-2) node[particle] (v1) {$A$} (.1,-2)--+(.6,0) (.4,-2) node[particle] (v2) {$A$} ;
        \draw[thick,->] (v1) to [out=45,in=135] (v2);
        \draw[thick,shift={(1.75,0)}] (-.1,-2)--+(-.6,0)  (.1,-2)--+(.6,0) (.4,-2) node[particle] (av2) {$A$}       node[particle,shift={(0,.35)}] (av1) {$A$}       ;
        \draw[decoration=snake,decorate,->] (-.75,-1.9)--(.75,-1.9);
        \end{scope}
        \end{scope}
        \end{scope}
        \end{scope}

      \end{scope}
      \begin{scope}[shift={(9.5,0)}]
        \path (-3.6,0) node[text width=6.25cm,anchor=base west] {Case 2: The $X$-tracer in state~$A$ jumps onto a site containing nontracer $B$-particles. It annihilates one of them and then enters state~$B$.};
              
        \begin{scope}[shift={(0,-.2)}]
          \draw[thick,shift={(-1.75,0)}] (-.1,-2)--+(-.6,0) (-.4,-2) node[particle] (v1) {$A^X$} (.1,-2)--+(.6,0) (.4,-2) node[particle] (v2) {$B$} node[particle,shift={(0,.35)}] (av1) {$B^Y$}            ;
        \draw[thick,->] (v1) to [out=45,in=135] (v2);
        \draw[thick,shift={(1.75,0)}] (-.1,-2)--+(-.6,0)  (.1,-2)--+(.6,0) (.4,-2) node[particle] (av2) {$B^X$}             node[particle,shift={(0,.35)}] (av1) {$B^Y$};
        \draw[decoration=snake,decorate,->] (-.75,-1.9)--(.75,-1.9);
        
        \begin{scope}[shift={(0.0,-1)}]
          \draw[thick,shift={(-1.75,0)}] (-.1,-2)--++(-.6,0) node[anchor=base east] {$\zeta$:}(-.4,-2) node[particle] (v1) {$\phantom{A}$} (.1,-2)--+(.6,0) (.4,-2) node[particle] (v2) {$B$}   node[particle,shift={(0,.35)}] (av1) {$B$}          ;
        \draw[thick,shift={(1.75,0)}] (-.1,-2)--+(-.6,0)  (.1,-2)--+(.6,0) (.4,-2) node[particle] (av2) {$B$}
            node[particle,shift={(0,.35)}] (av1) {$B$};
        \draw[decoration=snake,decorate,->] (-.75,-1.9)--(.75,-1.9);

        \begin{scope}[shift={(0.0,-1)}]
          \draw[thick,shift={(-1.75,0)}] (-.1,-2)--++(-.6,0) node[anchor=base east] {$\zeta^X$:}(-.4,-2) node[particle] (v1) {$A$} (.1,-2)--+(.6,0) (.4,-2) node[particle] (v2) {$B$}     node[particle,shift={(0,.35)}] (av1) {$B$}         ;
        \draw[thick,->] (v1) to [out=45,in=135] (v2);
        \draw[thick,shift={(1.75,0)}] (-.1,-2)--+(-.6,0)  (.1,-2)--+(.6,0) (.4,-2) node[particle] (av2) {$B$}             ;
        \draw[decoration=snake,decorate,->] (-.75,-1.9)--(.75,-1.9);

        \begin{scope}[shift={(0,-1)}]
          \draw[thick,shift={(-1.75,0)}] (-.1,-2)--++(-.6,0) node[anchor=base east] {$\zeta^{X,Y}$:}(-.4,-2) node[particle] (v1) {$A$} (.1,-2)--+(.6,0) (.4,-2) node[particle] (v2) {$B$}             ;
        \draw[thick,->] (v1) to [out=45,in=135] (v2);
        \draw[thick,shift={(1.75,0)}] (-.1,-2)--+(-.6,0)  (.1,-2)--+(.6,0) (.4,-2) node[particle] (av2) {$\phantom{A}$}             ;
        \draw[decoration=snake,decorate,->] (-.75,-1.9)--(.75,-1.9);
        \end{scope}
        \end{scope}
        \end{scope}
        \end{scope}

      \end{scope}

      \begin{scope}[shift={(3,-6.5)}]
        \path (-3.6,0) node[text width=6.25cm,anchor=base west] {Case 3: The $X$-tracer in state~$A$ jumps onto a site containing the $Y$-tracer in state~$B$ and no other particles. The tracers swap states.};
              
        \begin{scope}[shift={(0,-.2)}]
          \draw[thick,shift={(-1.75,0)}] (-.1,-2)--+(-.6,0) (-.4,-2) node[particle] (v1) {$A^X$} (.1,-2)--+(.6,0) (.4,-2) node[particle] (v2) {$B^Y$}         ;
        \draw[thick,->] (v1) to [out=45,in=135] (v2);
        \draw[thick,shift={(1.75,0)}] (-.1,-2)--+(-.6,0)  (.1,-2)--+(.6,0) (.4,-2) node[particle] (av2) {$A^Y$}         node[particle,shift={(0,.35)}] (av1) {$B^X$}            ;
        \draw[decoration=snake,decorate,->] (-.75,-1.9)--(.75,-1.9);
        
        \begin{scope}[shift={(0.0,-1)}]
          \draw[thick,shift={(-1.75,0)}] (-.1,-2)--++(-.6,0) node[anchor=base east] {$\zeta$:}(-.4,-2) node[particle] (v1) {$\phantom{A}$} (.1,-2)--+(.6,0) (.4,-2) node[particle] (v2) {$B$}   ;
        \draw[thick,shift={(1.75,0)}] (-.1,-2)--+(-.6,0)  (.1,-2)--+(.6,0) (.4,-2) node[particle] (av2) {$B$};
        \draw[decoration=snake,decorate,->] (-.75,-1.9)--(.75,-1.9);

        \begin{scope}[shift={(0.0,-1)}]
          \draw[thick,shift={(-1.75,0)}] (-.1,-2)--++(-.6,0) node[anchor=base east] {$\zeta^X$:}(-.4,-2) node[particle] (v1) {$A$} (.1,-2)--+(.6,0) (.4,-2) node[particle] (v2) {$B$}             ;
        \draw[thick,->] (v1) to [out=45,in=135] (v2);
        \draw[thick,shift={(1.75,0)}] (-.1,-2)--+(-.6,0)  (.1,-2)--+(.6,0)             ;
        \draw[decoration=snake,decorate,->] (-.75,-1.9)--(.75,-1.9);

        \begin{scope}[shift={(0,-1)}]
          \draw[thick,shift={(-1.75,0)}] (-.1,-2)--++(-.6,0) node[anchor=base east] {$\zeta^{X,Y}$:}(-.4,-2) node[particle] (v1) {$A$} (.1,-2)--+(.6,0) (.4,-2) node[particle] (v2) {$\phantom{A}$}             ;
        \draw[thick,->] (v1) to [out=45,in=135] (v2);
        \draw[thick,shift={(1.75,0)}] (-.1,-2)--+(-.6,0)  (.1,-2)--+(.6,0) (.4,-2) node[particle] (v2) {$A$} ;
        \draw[decoration=snake,decorate,->] (-.75,-1.9)--(.75,-1.9);
        \end{scope}
        \end{scope}
        \end{scope}
        \end{scope}

      \end{scope}
      \begin{scope}[shift={(9.5,-6.5)}]
        \path (-3.6,0) node[text width=6.25cm,anchor=base west] 
           {Case 4: The $Y$-tracer in state~$A$ jumps onto a site containing no nontracer $B$-particles. 
           No interaction occurs.};
        \begin{scope}[shift={(0,-.2)}]
        \draw[thick,shift={(-1.75,0)}] (-.1,-2)--+(-.6,0) (-.4,-2) node[particle] (v1) {$A^Y$} (.1,-2)--+(.6,0) (.4,-2) node[particle] (v2) {$B^X$}             ;
        \draw[thick,->] (v1) to [out=45,in=135] (v2);
        \draw[thick,shift={(1.75,0)}] (-.1,-2)--+(-.6,0)  (.1,-2)--+(.6,0) (.4,-2) node[particle] (av2) {$B^X$}         node[particle,shift={(0,.35)}] (av1) {$A^Y$}    ;
        \draw[decoration=snake,decorate,->] (-.75,-1.9)--(.75,-1.9);
        
        \begin{scope}[shift={(0.0,-1)}]
          \draw[thick,shift={(-1.75,0)}] (-.1,-2)--++(-.6,0) node[anchor=base east] {$\zeta$:}(-.4,-2) node[particle] (v1) {$\phantom{A}$} (.1,-2)--+(.6,0) (.4,-2) node[particle] (v2) {$B$}             ;
        \draw[thick,shift={(1.75,0)}] (-.1,-2)--+(-.6,0)  (.1,-2)--+(.6,0) (.4,-2) node[particle] (av2) {$B$};
        \draw[decoration=snake,decorate,->] (-.75,-1.9)--(.75,-1.9);

        \begin{scope}[shift={(0.0,-1)}]
          \draw[thick,shift={(-1.75,0)}] (-.1,-2)--++(-.6,0) node[anchor=base east] {$\zeta^X$:}(-.4,-2) node[particle] (v1) {$\phantom{A}$} (.1,-2)--+(.6,0) (.4,-2) node[particle] (v2) {$\phantom{A}$}             ;
        \draw[thick,shift={(1.75,0)}] (-.1,-2)--+(-.6,0)  (.1,-2)--+(.6,0)            ;
        \draw[decoration=snake,decorate,->] (-.75,-1.9)--(.75,-1.9);

        \begin{scope}[shift={(0,-1)}]
          \draw[thick,shift={(-1.75,0)}] (-.1,-2)--++(-.6,0) node[anchor=base east] {$\zeta^{X,Y}$:}(-.4,-2) node[particle] (v1) {$A$} (.1,-2)--+(.6,0) (.4,-2) node[particle] (v2) {$\phantom{A}$}             ;
        \draw[thick,->] (v1) to [out=45,in=135] (v2);
        \draw[thick,shift={(1.75,0)}] (-.1,-2)--+(-.6,0)  (.1,-2)--+(.6,0) (.4,-2) node[particle] (av2) {$A$}           ;
        \draw[decoration=snake,decorate,->] (-.75,-1.9)--(.75,-1.9);
        \end{scope}
        \end{scope}
        \end{scope}
        \end{scope}

      \end{scope}
      \begin{scope}[shift={(6.25,-13)}]
        \path (-4.1,0) node[text width=7.25cm,anchor=base west] 
           {Case 5: The $Y$-tracer in state~$A$ jumps onto a site containing nontracer $B$-particles.
           It annihilates ones of them and then enters state~$B$.};
        \begin{scope}[shift={(0,0.2)}]
        \draw[thick,shift={(-1.75,0)}] (-.1,-2)--+(-.6,0) (-.4,-2) node[particle] (v1) {$A^Y$} (.1,-2)--+(.6,0) (.4,-2) node[particle] (v2) {$B$}     node[particle,shift={(0,.35)}] (av1) {$B^X$}        ;
        \draw[thick,->] (v1) to [out=45,in=135] (v2);
        \draw[thick,shift={(1.75,0)}] (-.1,-2)--+(-.6,0)  (.1,-2)--+(.6,0) (.4,-2) node[particle] (av2) {$B^Y$}         node[particle,shift={(0,.35)}] (av1) {$B^X$}    ;
        \draw[decoration=snake,decorate,->] (-.75,-1.9)--(.75,-1.9);
        
        \begin{scope}[shift={(0.0,-1)}]
          \draw[thick,shift={(-1.75,0)}] (-.1,-2)--++(-.6,0) node[anchor=base east] {$\zeta$:}(-.4,-2) node[particle] (v1) {$\phantom{A}$} (.1,-2)--+(.6,0) (.4,-2) node[particle] (v2) {$B$}   node[particle,shift={(0,.35)}] (av1) {$B$}              ;
        \draw[thick,shift={(1.75,0)}] (-.1,-2)--+(-.6,0)  (.1,-2)--+(.6,0) (.4,-2) node[particle] (av2) {$B$}
          node[particle,shift={(0,.35)}] (av1) {$B$};
        \draw[decoration=snake,decorate,->] (-.75,-1.9)--(.75,-1.9);

        \begin{scope}[shift={(0.0,-1)}]
          \draw[thick,shift={(-1.75,0)}] (-.1,-2)--++(-.6,0) node[anchor=base east] {$\zeta^X$:}(-.4,-2) node[particle] (v1) {$\phantom{A}$} (.1,-2)--+(.6,0) (.4,-2) node[particle] (v2) {$B$}             ;
        \draw[thick,shift={(1.75,0)}] (-.1,-2)--+(-.6,0)  (.1,-2)--+(.6,0)  (.4,-2) node[particle] (av2) {$B$}          ;
        \draw[decoration=snake,decorate,->] (-.75,-1.9)--(.75,-1.9);

        \begin{scope}[shift={(0,-1)}]
          \draw[thick,shift={(-1.75,0)}] (-.1,-2)--++(-.6,0) node[anchor=base east] {$\zeta^{X,Y}$:}(-.4,-2) node[particle] (v1) {$A$} (.1,-2)--+(.6,0) (.4,-2) node[particle] (v2) {$B$}             ;
        \draw[thick,->] (v1) to [out=45,in=135] (v2);
        \draw[thick,shift={(1.75,0)}] (-.1,-2)--+(-.6,0)  (.1,-2)--+(.6,0)           ;
        \draw[decoration=snake,decorate,->] (-.75,-1.9)--(.75,-1.9);
        \end{scope}
        \end{scope}
        \end{scope}
        \end{scope}

      \end{scope}
    \end{tikzpicture}

      \caption{When a tracer particle jumps in the tracer system, there are five cases.
      The same notation is used here as in Figure~\ref{fig:nontracer.jump}.}\label{fig:tracer.jump}
  \end{figure}
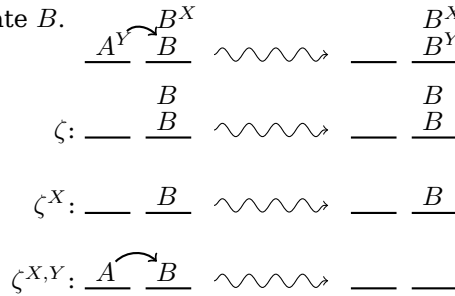

\FloatBarrier



\bibliographystyle{amsalpha}
\bibliography{icx}




\end{document}